\numberwithin{equation}{section}
\theoremstyle{plain}
\newtheorem{theorem}{Theorem}[section]
\newtheorem{lemma}[theorem]{Lemma}
\newtheorem{proposition}[theorem]{Proposition}
\newtheorem*{mtheorem}{Main Theorem}
\theoremstyle{definition}
\newtheorem{remark}[theorem]{Remark}
\newtheorem{open.problem}[theorem]{Open Problem}
\newcommand{\Leb}[1]{\mathscr{L}^{#1}} 
\newcommand{\Div}{\operatorname{div}}
\newcommand{\N}{\mathbb{N}}
\newcommand{\R}{\mathbb{R}}
\title[Quantitative Fractional Gaussian Isoperimetric Inequality]{A quantitative dimension free 
isoperimetric inequality for the fractional Gaussian perimeter}
\author[A.\ Carbotti]{Alessandro Carbotti}
\address{Dipartimento di Matematica
	e Fisica ``E. De Giorgi'', Universit\`a del Salento,
	Via Per Arnesano, 73100 Lecce, Italy.}
\email{alessandro.carbotti@unisalento.it}
\author[S.\ Cito]{Simone Cito}
\address{Dipartimento di Matematica
	e Fisica ``E. De Giorgi'', Universit\`a del Salento,
	Via Per Arnesano, 73100 Lecce, Italy.}
\email{simone.cito@unisalento.it}
\author[D. A. \ La Manna]{Domenico Angelo La Manna}
\address{Dipartimento di Matematica e Applicazioni ``R. Caccioppoli'', Universit\`a degli studi di Napoli
``Federico II'', Complesso di Monte Sant'Angelo, Via Cintia, 80126 Naples, Italy.}
\email{domenicoangelo.lamanna@unina.it}
\author[D.\ Pallara]{Diego Pallara}
\address{Dipartimento di Matematica
	e Fisica ``E. De Giorgi'', Universit\`a del Salento, and INFN, Sezione di Lecce,
	Via Per Arnesano, 73100 Lecce, Italy.}
\email{diego.pallara@unisalento.it}
\date{\today}  \linespread{1.2}
\keywords{Fractional Perimeters, Fractional Ornstein-Uhlenbeck, Extension Methods, Isoperimetric Inequalities, Stability Inequalities}
\subjclass[2010]{35R11, 49Q20}
\begin{document}
	\begin{abstract}
		We prove a quantitative isoperimetric inequality for the fractional Gaussian perimeter using extension techniques. Though the exponent of the Fraenkel asymmetry is not sharp, the constant appearing in the inequality does not depend on the dimension but only on the Gaussian volume of the set and on the fractional order.
	\end{abstract}
	
	\maketitle
	
	\tableofcontents
	
	\section{Introduction}
The Gaussian isoperimetric inequality states that among all sets with prescribed Gaussian measure, the halfspace is the one with least Gaussian perimeter. This result has been proved independently by Borell \cite{borell} and Sudakov-Tsirelson \cite{SudCir}. In \cite{CarKer} it has been proved that halfspaces are the only volume-constrained minimizers for the Gaussian perimeter, while in \cite{CiFuMaPr, BarBraJul, BarJul} inequalities of quantitative type, that allow to relate the deficit between a halfspace and a set with the same Gaussian volume with some function of the Gaussian measure of their symmetric difference, are proved. The results in \cite{CiFuMaPr} have been improved in \cite{MosNee1,MosNee2}.
On the other side, fractional perimeters and nonlocal perimeters depending on more general kernels have been object of great attention in the last years, since they are related to nonlocal minimal surfaces \cite{CafRoqSav, MazRosTol}, phase transitions \cite{valdinoci}, fractal sets \cite{lombardini} and many other problems. In the Euclidean setting, fractional isoperimetric inequalities of qualitative and quantitative type have been proved in \cite{CesNov, FraSei} and \cite{FuMiMo, FiFuMaMiMo}, respectively. See also \cite{ComSte} where the authors introduce a notion of fractional perimeter using a distributional approach and \cite{DiNoRuVa} where an isoperimetric problem with the competition of two fractional perimeters of different order is studied.
In \cite{NovPalSir} the authors introduce a notion of fractional Gaussian perimeter using the by now well known extension techniques introduced in \cite{CafSil, StiTor} and they prove a qualitative isoperimetric inequality in the more general setting of abstract Wiener spaces.
Inspired by the paper \cite{BraCinVit}, where the authors prove a stability estimate for the fractional Faber-Krahn inequality, and taking into account the extension technique of \cite{StiTor}, we prove a quantitative isoperimetric inequality for a fractional perimeter in the Gauss space. Although the technique is similar, we find a different exponent since the perimeter is given by the $H^{s/2}$ norm of the characteristic function, while the first eigenvalue depends on the $H^s$ norm. Moreover, similarly to the local case (see \cite{BarBraJul, eldan}), the constant appearing in the inequality does not depend on the dimension of the ambient space. This fact exploits Proposition \ref{prop:halfspace} where we prove that halfspaces have the same fractional Gaussian perimeter as halflines having the same one dimensional Gaussian measure. To conclude, we notice that the asymptotics as $s\to 0^+$ under the pointwise convergence and the asymptotics as $s\to 1^-$ under $\Gamma$-convergence have been studied in \cite{CCLP2} and in \cite{CCLP1} in the present setting. In \cite{DL} the authors give a different notion of Gaussian fractional perimeter of a measurable set $E$ in a bounded domain $\Omega\subset\R^N$ using a singular integral representation of the form
\begin{equation*}
\begin{split}
P^\gamma_s(E;\Omega):&=\int_{E\cap\Omega}e^{-\frac{|x|^2}{4}}dx\int_{E^c\cap\Omega}\frac{e^{-\frac{|y|^2}{4}}}{|x-y|^{N+s}}dy+\int_{E\cap\Omega}e^{-\frac{|x|^2}{4}}dx\int_{E^c\cap\Omega^c}\frac{e^{-\frac{|y|^2}{4}}}{|x-y|^{N+s}}dy \\
&+\int_{E\cap\Omega^c}e^{-\frac{|x|^2}{4}}dx\int_{E^c\cap\Omega}\frac{e^{-\frac{|y|^2}{4}}}{|x-y|^{N+s}}dy,
\end{split}
\end{equation*}
and they prove the Gamma convergence of $(1-s)P^\gamma_s(E;\Omega)$ to the Gaussian perimeter as $s\to 1^-$ exploiting techniques similar to the ones used in \cite{AmDeMa}. See also \cite{BerPal}, where kernels with faster than $L^1$ decay at infinity are taken into account.

The precise statement of our main result is the following.
\begin{mtheorem}
	\label{teo:maintheorem}
	Let $N\geq 1$, $s\in (0,1)$ and $m\in (0,1)$. For any set $E$ with finite fractional Gaussian perimeter
	of order $s$ and $\gamma(E)=m$ we have
	\begin{equation} \label{eq:main}
	D_s^\gamma (E):=P_s^\gamma (E)- P^\gamma_s(H) \geq  C_{s,m} \mathcal{A}_\gamma(E)^{\frac 2s},
	\end{equation}
	where $H$ is any halfspace with $\gamma(H)=\gamma (E)$ and $C_{s,m}$ is a positive constant which depends only on $s$ and $m$.
\end{mtheorem}
Here $\mathcal{A}_\gamma(E)$ denotes the {\em Gaussian Fraenkel asymmetry}: for the precise definition of the quantities involved in \eqref{eq:main} we invite the reader to check Section \ref{sec:intro}.
We notice that, as far as we know, the notion of perimeter used here is not a particular case of the one given in \cite{cabre, pagliari}, where the authors independently prove the local minimality of halfspaces for a broad class of nonlocal perimeters using some calibration methods. See also the recent \cite{CaDoPaPi} where the result is proved in the more general setting of Carnot Groups.

The paper is structured as follows. In Section \ref{sec:intro} we introduce the notation used throughout the paper and state some preliminary results. In Section \ref{sec:tre} we recall the extension technique used to define the fractional Gaussian perimeter of a measurable set (roughly speaking, we introduce a new ``vertical'' variable in order to study an equivalent degenerate local problem in the upper halfspace in one dimension more), we give some estimate of the rate of convergence of the extension to the original function and we prove a crucial result to obtain a dimension free constant in our Main Theorem. We also give an approximation of the Gaussian fractional perimeter of the halfspace, whose precise computation is not known up to our knowledge. Section \ref{sec:quattro} is more technical; here we collect some useful results that relate the asymmetry of a given measurable set with the asymmetry of some suitable level sets of the extension. Section \ref{sec:cinque} is devoted to the proof of the Main Theorem. Finally, in Section \ref{sec:sei}, we collect some remarks about our results and we discuss some open problems arising from our analysis.

\paragraph*{\bf Acknowledgements} A.C. has been partially supported by the TALISMAN project Cod. ARS01-01116. S.C. has been partially supported by the ACROSS project Cod. ARS01-00702. D.A.L. has been supported by the Academy of Finland grant 314227. 
D.P. is member of G.N.A.M.P.A. of the Italian Istituto Nazionale di Alta Matematica (INdAM) and has been partially supported by the PRIN 2015 MIUR project 2015233N54. 
\vspace{.5cm}
\section{Preliminary Results}\label{sec:intro}
For $N\in\N$ we denote by $\gamma_N$ and $\mathcal{H}^{N-1}_\gamma$, respectively, the Gaussian measure on $\R^N$ and the 
$(N-1)$-Hausdorff Gaussian measure
\begin{align*}
\gamma_N&:=\frac{1}{(2\pi)^{N/2}}e^{-\frac{|\cdot|^2}{2}}\Leb{N},
\\
\mathcal{H}^{N-1}_\gamma&:=\frac{1}{(2\pi)^{(N-1)/2}}e^{-\frac{|\cdot|^2}{2}}\mathcal{H}^{N-1},
\end{align*}
where $\Leb{N}$ and $\mathcal{H}^{N-1}$ are the Lebesgue measure and the Euclidean $(N-1)$-dimensional Hausdorff measure, respectively. 
When $k\in \{1,\dots, N\}$ is a given integer, we denote by $\gamma_k$ the standard $k$-dimensional Gaussian measure; when there is no ambiguity we simply write $\gamma$ instead of $\gamma_N$. 

The Gaussian perimeter of a measurable set $E$ in an open set $\Omega$ is defined as
$$P_\gamma(E;\Omega)=\sqrt{2\pi}\sup\left\{\int_E\left(\text{div}\,\varphi-\varphi\cdot x\right)\:d\gamma(x):\varphi\in C^\infty_c(\Omega;\R^N),\ \|\varphi\|_\infty\le 1\right\}.$$
If $\Omega=\R^N$, we denote the Gaussian perimeter of $E$ in the whole $\R^N$ simply by $P_\gamma(E)$. Moreover, if $E$ has finite Gaussian perimeter, then $E$ has locally finite Euclidean perimeter and it holds
$$
P_\gamma(E)=\mathcal{H}^{N-1}_\gamma(\partial^\ast E)=\frac{1}{(2\pi)^{\frac{(N-1)}{2}}}
\int_{\partial^\ast E}e^{-\frac{|x|^2}{2}}d\mathcal{H}^{N-1}(x),
$$
where $\partial^\ast E$ is the reduced boundary of $E$. We refer to \cite{AFP} for the properties of 
sets with finite perimeter. 

We introduce the increasing function $\Phi:\mathbb{R}\rightarrow (0,1)$ by 
$$
\Phi(r):=\int_{-\infty}^r d\gamma_1(t),
$$
and its inverse $\Phi^{-1}:(0,1)\rightarrow \mathbb{R}$.
We have 
$$
\gamma(H_{\omega,r})=\Phi(r)
$$
and
$$
P_\gamma(H_{\omega,r})=e^{-r^2/2},
$$
where, for $\omega\in\mathbb{S}^{N-1}$ and $r\in\mathbb{R}$, $H_{\omega,r}$ denotes the halfspace
$$
H_{\omega,r}:=\left\{x\in \mathbb{R}^N\quad\text{s.t.}\quad x\cdot\omega <r \right\}.
$$
Moreover, the Gaussian perimeter of any halfspace with Gaussian volume $m\in (0,1)$ is given by
\begin{equation}
\label{eq:isoperimetricfunction}
I(m):=e^{-\frac{\Phi^{-1}(m)^2}{2}},
\end{equation}
where $I:(0,1)\rightarrow(0,1]$ is usually called \emph{isoperimetric function}, and the Gaussian isoperimetric inequality reads as follows
\begin{equation}
\label{eq:gaussisopine}
P_\gamma(E) \ge I(\gamma(E)),
\end{equation}
stating that halfspaces are the unique (see \cite{CarKer}) volume constrained minimizers of the Gaussian perimeter. A sharp stability result for \eqref{eq:gaussisopine} has been obtained in \cite{BarBraJul}. 
Following \cite{EhrScand}, we introduce a suitable notion of symmetrization in the Gauss space. 
First, for any $J\subset\R$ we set 
\begin{equation}\label{defI*}
J^*=(-\infty, \Phi^{-1}(\gamma_1(J))) .
\end{equation}
Then, for $h\in \R^N$ with $|h|=1$, we consider the projection
$x'=x-(x\cdot h) h$ and write $x=x'+th$ with $t\in \R$,  
and for every measurable function $u:\R^N\to\R$ we define the symmetrized function in the sense of Ehrhard
\begin{equation}\label{defu*}
u_h^*(x'+th)=\sup\Bigl\{c\in {\mathbb R}:\ t\in\{u(x'+\cdot h)>c\}^*\Bigr\}. 
\end{equation}
Notice that if $u$ is (weakly) differentiable, $u^*_h$ is differentiable as well and the inequality
\[
\int_{\R^N} |\nabla u^*_h(x)|^2\, d\gamma(x) \leq \int_{\R^N}|\nabla u(x)|^2\, d\gamma(x)
\]
holds, see \cite[Theorem 3.1]{ehrhard} for the Lipschitz case; the Sobolev case easily follows by approximation. 
Since symmetrization preserves the class of characteristic functions, for every measurable set $E\subset \R^N$
we may define the Ehrhard-symmetrized set $E_h^*$ through the equality 
\[
\chi_{E_h^*}=(\chi_E)_h^* .
\]
We define the {\em Gaussian Fraenkel asymmetry} and the {\em fractional Gaussian isoperimetric deficit} of 
a set $E$ as
$$
\mathcal{A}_\gamma(E):=\min_{\omega\in\mathbb{S}^{N-1}}
\frac{\gamma(E\triangle H_{\omega,r})}{\gamma(E)},
$$
and
$$
D^{\gamma}_s(E):=P^\gamma_s(E)-P^{\gamma}_s(H_{\omega,r}),
$$
where $\triangle$ stands for the symmetric difference between sets and $P^\gamma_s(E)$ is the 
{\em $s$-fractional Gaussian perimeter} of $E$, see Section \ref{sec:tre}. 
These definitions are motivated 
by the fact that halfspaces are the optimal sets for the fractional isoperimetric problem as well, see 
\cite{NovPalSir}. 

\section{The extension technique and the fractional Gaussian perimeter}
\label{sec:tre}

In this section we collect the main results leading to the definition of the fractional Gaussian perimeter 
of a set and some preliminary results. Our approach is  based on the extension technique due to Caffarelli-Silvestre \cite{CafSil} in the 
Euclidean case and extended to wider frameworks, including the Gaussian case, by Stinga-Torrea in 
\cite{StiTor}. In the sequel, for any $1\le p<\infty$ we use the notation $L^p_\gamma$ for the space
$L^p(\R^N,d\gamma)$ and recall that in the Gaussian case the Ornstein-Uhlenbeck operator plays the same 
role as the Laplacian in the Euclidean setting. The Ornstein-Uhlenbeck operator $\Delta_\gamma$ is defined,
for $u$ sufficiently smooth, as
\begin{equation}
\label{eq:OU}
(\Delta_\gamma u)(x):=(\Delta u)(x)- x\cdot\nabla u(x).
\end{equation}
Since it comes from the symmetric bilinear form
\[
{\mathcal E}(u,v) :=\frac 12\int_{\R^N} \nabla u \cdot \nabla v\, d\gamma,
\]
we have that $-\Delta_\gamma$ is a positive definite selfadjoint operator which generates a $C_0$-semigroup 
of contractions, which we denote by $e^{t\Delta_{\gamma}}$, in $L^2_\gamma$ (see, \cite{LunMetPal} for a 
recent survey of the main properties of $\Delta_{\gamma},\ e^{t\Delta_{\gamma}}$ and references). As in \cite{StiTor}, we can define its fractional powers by means of classical spectral decomposition by the
Bochner's subordination formula (see e.g. \cite{MarSan})
\begin{equation}
\label{eq:bochnersubordination}
(-\Delta_\gamma)^su:=\frac{1}{\Gamma(-s)}\int_0^{\infty}\frac{e^{t\Delta_\gamma} u-u}{t^{s+1}}dt,
\end{equation}
where $\Gamma$ denotes the Euler gamma function and the Ornstein-Uhlenbeck semigroup $e^{t\Delta_\gamma}$ 
is given by the Mehler formula recalled in \cite{LunMetPal}
\begin{align*}
(e^{t\Delta_\gamma}u)(x)&:=\frac{1}{\left(2\pi(1-e^{-2t})\right)^{N/2}}\int_{\R^N}u(e^{-t}x-y)e^{-\frac{|y|^2}{2(1-e^{-2t})}}dy
\\
&=\int_{\R^N}u(e^{-t}x+\sqrt{1-e^{-2t}}y)d\gamma(y).
\end{align*}
Since for any $\lambda>0$ it holds 
$$
\left(\frac{1}{|\Gamma(-\frac s2)|}\int_0^{\infty}\frac{1-e^{-t\lambda}}{t^{\frac s2+1}}dt\right)^2=\lambda^s,
$$
again by functional calculus and Bochner's subordination formula we deduce 
\begin{equation}\label{eq:s/2}
(-\Delta_\gamma)^{\frac s2}\circ(-\Delta_\gamma)^{\frac s2}=(-\Delta_\gamma)^s.
\end{equation}

For an equivalent definition of $(-\Delta_\gamma)^s$ and for other qualitative properties involving the fractional Ornstein-Uhlenbeck operator we refer to \cite{FeStVo}.

The next proposition is an easy consequence of selfadjointness. 
\begin{proposition}\label{selfadjointness}
    For $u, v\in\text{\emph{Dom}}((-\Delta_\gamma)^s)$ it holds
\end{proposition}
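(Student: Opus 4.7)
The plan is to leverage the spectral/subordination definition of $(-\Delta_\gamma)^s$ together with the already established selfadjointness of the Ornstein-Uhlenbeck semigroup $\{e^{t\Delta_\gamma}\}_{t\ge0}$ on $L^2_\gamma$. Since the proposition is announced as ``an easy consequence of selfadjointness'', I expect the identity to be the polarized/symmetric form of $(-\Delta_\gamma)^s$, namely
\[
\int_{\R^N} ((-\Delta_\gamma)^s u)\,v\,d\gamma \;=\; \int_{\R^N}(-\Delta_\gamma)^{s/2}u\cdot (-\Delta_\gamma)^{s/2}v\,d\gamma \;=\; \int_{\R^N} u\cdot((-\Delta_\gamma)^s v)\,d\gamma,
\]
which is exactly the ``carré du champ'' presentation needed later to read the fractional perimeter as an $H^{s/2}$ seminorm.

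First I would plug the Bochner subordination formula \eqref{eq:bochnersubordination} into the left-hand pairing: writing
\[
\langle(-\Delta_\gamma)^s u,v\rangle_{L^2_\gamma} \;=\; \frac{1}{\Gamma(-s)}\int_0^\infty \frac{\langle e^{t\Delta_\gamma}u - u,\,v\rangle_{L^2_\gamma}}{t^{s+1}}\,dt,
\]
and exchanging the $t$-integral with the $\gamma$-integral by Fubini (justified since $u,v\in\mathrm{Dom}((-\Delta_\gamma)^s)$, hence the integrand is absolutely integrable by the standard estimate $|\langle e^{t\Delta_\gamma}u-u,v\rangle|\lesssim \min\{t,1\}\,\|u\|_{\mathrm{Dom}}\|v\|_2$). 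The $L^2_\gamma$-selfadjointness of $e^{t\Delta_\gamma}$, immediate from Mehler's formula and Fubini, then gives $\langle e^{t\Delta_\gamma}u-u,v\rangle = \langle u,e^{t\Delta_\gamma}v-v\rangle$, and reinserting into the subordination integral yields $\langle (-\Delta_\gamma)^s u,v\rangle = \langle u,(-\Delta_\gamma)^s v\rangle$, which is the selfadjointness of $(-\Delta_\gamma)^s$.

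For the ``splitting'' identity through $(-\Delta_\gamma)^{s/2}$ I would first apply the same argument at the level $s/2$ (the domain condition $u\in\mathrm{Dom}((-\Delta_\gamma)^s)$ ensures $u\in\mathrm{Dom}((-\Delta_\gamma)^{s/2})$ since fractional powers form a scale of closed operators with nested domains). Then I would combine this with the composition rule \eqref{eq:s/2}: writing $(-\Delta_\gamma)^s = (-\Delta_\gamma)^{s/2}\circ(-\Delta_\gamma)^{s/2}$, the pairing $\langle(-\Delta_\gamma)^s u,v\rangle$ becomes $\langle(-\Delta_\gamma)^{s/2}((-\Delta_\gamma)^{s/2}u),v\rangle$, and one application of the selfadjointness of $(-\Delta_\gamma)^{s/2}$ moves a half-power onto $v$, giving the required symmetric bilinear form.

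The main obstacle is purely a domain/integrability bookkeeping one: one must check that the double integral over $t$ and $\R^N$ converges absolutely so that Fubini applies, and that $u\in\mathrm{Dom}((-\Delta_\gamma)^s)$ forces $u\in\mathrm{Dom}((-\Delta_\gamma)^{s/2})$ with $(-\Delta_\gamma)^{s/2}u\in\mathrm{Dom}((-\Delta_\gamma)^{s/2})$, so that the composition \eqref{eq:s/2} is truly an equality of operators on the relevant domain. Both facts follow from the spectral theorem applied to the positive selfadjoint operator $-\Delta_\gamma$ (via its Hermite expansion), and no further structure of the Gaussian setting is needed beyond the $L^2_\gamma$-selfadjointness of $e^{t\Delta_\gamma}$.
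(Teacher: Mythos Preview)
Your guess at the identity is not quite what the paper states. Because the displayed formula sits \emph{outside} the proposition environment, you only saw ``it holds'' and had to infer the rest; the actual claim is
\[
\int_{\R^N} e^{t\Delta_\gamma}(-\Delta _\gamma)^s v \cdot u \, d\gamma
=\int_{\R^N} e^{\frac t2\Delta_\gamma}(-\Delta _\gamma)^{\frac s2} v \cdot e^{\frac t2\Delta_\gamma}(-\Delta _\gamma)^{\frac s2} u\, d\gamma,
\]
for every $t\ge 0$. Your proposal proves precisely the case $t=0$, and indeed the pairing identity you wrote is later used in \eqref{eq:seminormandoufrac}; but the proposition is invoked in the proof of Proposition~\ref{pr:\ spiccolo} with a genuine $t>0$ (the variable of the subordination integral), so the semigroup cannot be dropped.

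The fix is small and uses exactly the ingredients you already lined up, plus two more: the semigroup law $e^{t\Delta_\gamma}=e^{\frac t2\Delta_\gamma}e^{\frac t2\Delta_\gamma}$ and the fact (again by functional calculus) that $e^{r\Delta_\gamma}$ commutes with $(-\Delta_\gamma)^{s/2}$. The paper's argument runs: split $(-\Delta_\gamma)^s$ via \eqref{eq:s/2}, split $e^{t\Delta_\gamma}$ via the semigroup law, commute one $e^{\frac t2\Delta_\gamma}$ past one $(-\Delta_\gamma)^{s/2}$, and finally move the selfadjoint factor $e^{\frac t2\Delta_\gamma}(-\Delta_\gamma)^{s/2}$ onto $u$. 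Your Bochner-subordination derivation of the selfadjointness of $(-\Delta_\gamma)^s$ is correct but unnecessary here: once one accepts the spectral theorem for $-\Delta_\gamma$, both $(-\Delta_\gamma)^{s/2}$ and $e^{r\Delta_\gamma}$ are selfadjoint and commute, and the identity is a one-line consequence---which is exactly how the paper presents it.
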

\[
\int_{\R^N} e^{t\Delta_\gamma}(-\Delta _\gamma)^s v  u \, d\gamma = 
\int_{\R^N}e^{\frac t2\Delta_\gamma}(-\Delta _\gamma)^\frac s2 v e^{\frac t2\Delta_\gamma}
(-\Delta _\gamma)^\frac s2u\, d\gamma .
\]
\begin{proof}
Since $(-\Delta_\gamma)^s$ and $e^{t\Delta_\gamma}$ are selfadjoint operators in $L^2_\gamma$, from \eqref{eq:s/2} and the semigroup law we get 
\begin{align*}
&\int_{\R^N} e^{t\Delta_\gamma}(-\Delta _\gamma)^s v  u \, d\gamma = 
\int_{\R^N} e^{t\Delta_\gamma}(-\Delta _\gamma)^\frac s2
\circ (-\Delta _\gamma)^\frac s2 v  u \, d\gamma 
\\
=& 
\int_{\R^N}(-\Delta _\gamma)^\frac s2e^{\frac t2\Delta_\gamma} e^{\frac t2\Delta_\gamma}
(-\Delta_\gamma)^\frac s2v\, u\, d\gamma =
\int_{\R^N}e^{\frac t2\Delta_\gamma}(-\Delta _\gamma)^\frac s2 v e^{\frac t2\Delta_\gamma}
(-\Delta _\gamma)^\frac s2u\, d\gamma .
\end{align*}
\end{proof}
As pointed out by Stinga and Torrea in \cite{StiTor}, the fractional powers of the Ornstein-Uhlenbeck
operator can be obtained through an auxiliary problem, as it happens in the Euclidean case, 
see \cite{CafSil}.
\begin{theorem} \label{teo: extension}
Let $\varphi\in\text{\emph{Dom}}((-\Delta_\gamma)^s)$. The solution of the extension problem
\begin{equation}\label{eq:extension}
\left\{
\begin{array}{ll}
\Delta_{\gamma_x} V+\frac{1-2s}{z}\partial_zV+\partial^2_zV=0\quad&\text{in}\quad\R^{N+1}_{+} \\
V(x,0)=\varphi(x) &\text{in}\quad\R^N.
\end{array}\right.
\end{equation}
is given by 
\begin{equation}
U_\varphi (x,z) = \frac{1}{\Gamma(s)}
\int_0^\infty e^{t\Delta_\gamma}(-\Delta _\gamma)^s \varphi(x) \frac{e^{-\frac{z^2}{4t}}}{t^{1-s}}\,dt
\end{equation}
and it satisfies
$$
-\lim_{z\to 0^+}z^{1-2s} \partial_z U_\varphi (x,z)=K_{2s}(-\Delta_\gamma)^s \varphi(x),
$$
where
\begin{equation}
\label{eq:constant}
K_{2s}:=\frac{2s|\Gamma(-s)|}{4^s \Gamma (s)}.
\end{equation}
\end{theorem}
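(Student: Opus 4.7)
The plan is to verify three properties of the explicit formula for $U_\varphi$: the boundary trace $U_\varphi(x,0)=\varphi(x)$, the PDE \eqref{eq:extension} in $\R^{N+1}_{+}$, and the conormal limit at $z=0^+$. Each should follow from a direct computation in the spectral calculus for $-\Delta_\gamma$, transposing the Caffarelli--Silvestre argument \cite{CafSil} to the Ornstein--Uhlenbeck setting as in \cite{StiTor}. For the trace, setting $z=0$ reduces the defining integral to the standard Bochner subordination representation
\[
(-\Delta_\gamma)^{-s}\psi(x)=\frac{1}{\Gamma(s)}\int_0^\infty e^{t\Delta_\gamma}\psi(x)\, t^{s-1}\, dt,
\]
evaluated at $\psi=(-\Delta_\gamma)^s\varphi$, which by functional calculus equals $\varphi(x)$.

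To establish the PDE I would introduce $g(x,t):=e^{t\Delta_\gamma}(-\Delta_\gamma)^s\varphi(x)$ and $k(z,t):=t^{s-1}e^{-z^2/(4t)}$ and first check the pointwise identity
\[
\Bigl(\partial_z^2+\tfrac{1-2s}{z}\partial_z\Bigr)k(z,t)=\partial_t k(z,t),
\]
which is a one-line computation. Differentiating under the integral and then integrating by parts in $t$ (boundary terms vanish, cf.\ below) gives
\[
\Bigl(\partial_z^2+\tfrac{1-2s}{z}\partial_z\Bigr)U_\varphi(x,z)=-\frac{1}{\Gamma(s)}\int_0^\infty \partial_t g(x,t)\, k(z,t)\, dt.
\]
Since $\partial_t g=\Delta_\gamma g$, the right-hand side equals $-\Delta_{\gamma_x}U_\varphi(x,z)$, which is exactly the equation.

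For the conormal behaviour I would differentiate the defining integral in $z$ to get
\[
-z^{1-2s}\partial_z U_\varphi(x,z)=\frac{z^{2-2s}}{2\Gamma(s)}\int_0^\infty g(x,t)\, \frac{e^{-z^2/(4t)}}{t^{2-s}}\, dt,
\]
and then perform the change of variables $u=z^2/(4t)$, which absorbs the prefactor $z^{2-2s}$ and leaves
\[
-z^{1-2s}\partial_z U_\varphi(x,z)=\frac{4^{1-s}}{2\Gamma(s)}\int_0^\infty g\!\left(x,\tfrac{z^2}{4u}\right)e^{-u}u^{-s}\, du.
\]
Strong continuity of the semigroup together with dominated convergence as $z\to 0^+$ should then yield
\[
\lim_{z\to 0^+}\bigl(-z^{1-2s}\partial_z U_\varphi(x,z)\bigr)=\frac{4^{1-s}\Gamma(1-s)}{2\Gamma(s)}\,(-\Delta_\gamma)^s\varphi(x),
\]
and the functional equation $\Gamma(1-s)=(-s)\Gamma(-s)=s|\Gamma(-s)|$ (using $\Gamma(-s)<0$ on $(0,1)$) rewrites the constant as exactly $K_{2s}$ from \eqref{eq:constant}.

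The main obstacle I expect is the analytic justification for passing derivatives and limits through the $t$-integral and for the integration by parts above. This should reduce to exhibiting integrable majorants after splitting the $t$-integral at $t=1$: on $(0,1)$ the factor $e^{-z^2/(4t)}$ provides decay of arbitrary polynomial order, while on $(1,\infty)$ the contractivity $\|e^{t\Delta_\gamma}(-\Delta_\gamma)^s\varphi\|_{L^2_\gamma}\leq\|(-\Delta_\gamma)^s\varphi\|_{L^2_\gamma}$ combined with the weights $t^{s-1}$ or $t^{s-2}$ yields an $L^2_\gamma$-valued integrable bound; the spectral gap of $-\Delta_\gamma$ on mean-zero data gives additional exponential decay if required. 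The hypothesis $\varphi\in\mathrm{Dom}((-\Delta_\gamma)^s)$ is precisely what makes $(-\Delta_\gamma)^s\varphi\in L^2_\gamma$ and is therefore the correct regularity for the whole argument.
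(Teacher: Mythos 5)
Your proof is correct, and it is essentially the original Stinga--Torrea argument: the paper itself does not prove this theorem but imports it from \cite{StiTor}, and your three steps (the trace via the Balakrishnan-type formula, the kernel identity $(\partial_z^2+\tfrac{1-2s}{z}\partial_z)k=\partial_t k$ followed by integration by parts in $t$ using $\partial_t g=\Delta_\gamma g$, and the substitution $u=z^2/(4t)$ producing $\tfrac{4^{1-s}\Gamma(1-s)}{2\Gamma(s)}=K_{2s}$) all check out, including the integrability justifications. The only caveat, inherited from the statement itself rather than introduced by you, is the zero mode: since the constants lie in the kernel of $\Delta_\gamma$, functional calculus gives $U_\varphi(\cdot,0)=\varphi-\int_{\R^N}\varphi\,d\gamma$ rather than $\varphi$, so the trace identity is exact only for mean-zero data (or after adding back the constant, which extends trivially); this is harmless for the paper's use since the $H^{s/2}_\gamma$ seminorm and the conormal limit are insensitive to the mean.
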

Coming to fractional Sobolev spaces, for $s\in(0,1)$ in the spirit of \cite{StiTor} we define the space $H_\gamma^s$ as 
the space of functions $u\in L^2_\gamma$ such that the following seminorm
$$
[u]^2_{H^s_\gamma}:= \inf \left\{\iint_{\R^{N+1}_+}\left(|\nabla_x v|^2+|\partial_z v|^2\right)z^{1-2s}
d\gamma(x)dz:\, v \in H^1_{\text{loc}} (\R^{N+1}_+), \, v(\cdot,0)=u\right\}
$$
is finite. If for a  function $u$ the infimum is achieved, the minimizer $U\in H_{\text{loc}}^1(\R^{N+1}_+)$ 
of the above functional is a weak solution of \eqref{eq:extension} with $u$ in place of $\varphi$.
In particular, when $u=\chi_E$ for some measurable set $E$, we define the {\em 
fractional Gaussian perimeter} of $E$ as 
$$
P^\gamma_s(E):=\frac 12[\chi_E]^2_{H^{\frac s2}_\gamma}.
$$
After this preparation we define an inner product in $H_\gamma^s$ by 
$$
\langle u,v \rangle_{H_\gamma^s} =K_{2s}\int_{\R^N}v(-\Delta_\gamma)^s u\, d\gamma = 
K_{2s}\int_{\R^N} u (-\Delta_\gamma)^s v \,d\gamma
$$
whenever $u,v \in \text{Dom}((-\Delta_\gamma)^s)$. This gives the equality
$$
[u]_{H_\gamma^s}^2= K_{2s}
\int_{\R^N}u(-\Delta_\gamma)^s u\, d\gamma .
$$
Note that when $s<1$, using Bochner's formula, we have 
\begin{equation}
\label{eq:seminormandoufrac}
[u]_{H_\gamma^s}^2= K_{2s}\int_{\R^N}u(-\Delta_\gamma)^s u\, d\gamma= K_{2s}\|(-\Delta_\gamma)^\frac s2 u\|^2_{L^2_\gamma}
\end{equation}
for every $u \in \text{Dom}((-\Delta_\gamma)^s)$.

Let us prove that the fractional Gaussian perimeter of a halfspace is the same in any dimension. 

\begin{proposition}\label{prop:halfspace}
	For $s\in(0,1)$ and $r\in\R$ we set
	$$H_r:=(-\infty,r)\quad\text{and}\quad H^N_r:=\left\{x\in\R^N:\ x_N<r\right\}.$$
	Then we have 
	$$
	P^\gamma_s(H^N_r)=P^{\gamma_1}_s(H_r),
	$$
	i.e., $P^\gamma_s(H^N_r)$ does not depend on the dimension $N$.
\end{proposition}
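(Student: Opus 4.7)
My approach is to exploit the product structure $d\gamma_N = d\gamma_{N-1}(x') \otimes d\gamma_1(x_N)$ together with the variational characterisation
\[
P^\gamma_s(E) = \tfrac12 \inf\left\{\iint_{\R^{N+1}_+}(|\nabla_x v|^2 + |\partial_z v|^2)\, z^{1-s}\, d\gamma_N(x)\, dz : v \in H^1_{\mathrm{loc}}(\R^{N+1}_+),\ v(\cdot,0)=\chi_E\right\},
\]
and to prove the two inequalities $P^\gamma_s(H^N_r) \leq P^{\gamma_1}_s(H_r)$ and $P^{\gamma_1}_s(H_r) \leq P^\gamma_s(H^N_r)$ separately, by lifting the one-dimensional extension in the first direction and by Gaussian averaging of an $(N{+}1)$-dimensional one in the second.

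For the upper bound I take any admissible extension $U(t,z)$ of $\chi_{H_r}$ in the one-dimensional problem and lift it to $V(x,z) := U(x_N,z)$. Since $V$ is independent of $x_1,\dots,x_{N-1}$, one has $V(\cdot,0)=\chi_{H^N_r}$ and $|\nabla_x V|^2 = |\partial_{x_N} U(x_N,z)|^2$. Fubini together with the fact that $\int_{\R^{N-1}} 1\, d\gamma_{N-1} = 1$ yields
\[
\iint_{\R^{N+1}_+}(|\nabla_x V|^2 + |\partial_z V|^2)\, z^{1-s}\, d\gamma_N\, dz = \iint_{\R^{2}_+}(|\partial_t U|^2 + |\partial_z U|^2)\, z^{1-s}\, d\gamma_1\, dz,
\]
and taking the infimum over $U$ gives $P^\gamma_s(H^N_r) \leq P^{\gamma_1}_s(H_r)$.

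For the reverse inequality I start from an admissible extension $V$ of $\chi_{H^N_r}$ and average out the redundant coordinates by setting
\[
W(t,z) := \int_{\R^{N-1}} V(x',t,z)\, d\gamma_{N-1}(x'),
\]
so that $W(\cdot,0)=\int_{\R^{N-1}}\chi_{H^N_r}\,d\gamma_{N-1} = \chi_{H_r}$. Since $d\gamma_{N-1}$ is a probability measure, Jensen's inequality applied to the convex function $\xi\mapsto|\xi|^2$ gives $|\partial_t W|^2 \leq \int|\partial_{x_N} V|^2\, d\gamma_{N-1}$ and $|\partial_z W|^2 \leq \int|\partial_z V|^2\, d\gamma_{N-1}$; summing and discarding the non-negative terms $|\partial_{x_j} V|^2$ for $j<N$ produces
\[
\iint_{\R^2_+}(|\partial_t W|^2 + |\partial_z W|^2)\, z^{1-s}\, d\gamma_1\, dz \leq \iint_{\R^{N+1}_+}(|\nabla_x V|^2 + |\partial_z V|^2)\, z^{1-s}\, d\gamma_N\, dz,
\]
and infimising over $V$ yields $P^{\gamma_1}_s(H_r) \leq P^\gamma_s(H^N_r)$. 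The only mildly delicate point is verifying that $W$ belongs to $H^1_{\mathrm{loc}}(\R^2_+)$ with the correct trace on $\{z=0\}$; this is a routine consequence of the finite energy of $V$ together with the commutation of $\partial_t,\partial_z$ with the Gaussian integration in $x'$, which can be justified by approximating $V$ with smooth truncations and passing to the limit.
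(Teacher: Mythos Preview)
Your proof is correct and takes a genuinely different route from the paper's. The paper does not prove two inequalities: it lifts the one-dimensional solution $v$ of the extension PDE to $w(x,z):=v(x_N,z)$, checks that $w$ solves the $N$-dimensional extension PDE and has finite energy equal to the one-dimensional energy, and then invokes strict convexity of the energy functional to conclude that $w$ is in fact \emph{the} minimiser in $N$ dimensions; equality of the perimeters follows at once. Your argument, by contrast, is purely variational: the upper bound by lifting is the same idea as the paper's, but for the lower bound you average an arbitrary $N$-dimensional competitor over the redundant Gaussian variables and appeal to Jensen. This avoids any mention of the PDE or of the uniqueness/strict convexity principle, and in that sense is more elementary and self-contained; on the other hand, the paper's approach is slightly more efficient once one is already working within the Stinga--Torrea framework, since it identifies the minimiser explicitly and handles both directions simultaneously.
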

\begin{proof}
	Let $(y,z)\in\R^2_+$, let $v(y,z)$ be the solution of
	\begin{equation}
	\label{eq:onedimensionalsolution}
\left\{
\begin{array}{ll}
	\partial^2_yu-y\partial_yu+\frac{1-s}{z}\partial_zu+\partial^2_zu=0\quad &\text{in}\quad \R^2_+ \\
	u(y,0)=\chi_{H_r}(y) &\text{in}\quad\R,
	\end{array}\right.
	\end{equation}
	and consider
	\begin{equation}
	\label{eq:ndimensionalsolution}
	\left\{ \begin{array}{ll}
	\Delta_{\gamma_x}u+\frac{1-s}{z}\partial_zu+\partial^2_zu=0\quad &\text{in}\quad \R^{N+1}_+ \\
	u(x,0)=\chi_{H^N_r}(x) &\text{in}\quad\R^N.
	\end{array}\right.
	\end{equation}
	We prove that $w(x,z):=v(x_N,z)$ solves \eqref{eq:ndimensionalsolution}. Indeed, we have 
	\begin{equation}
	\label{eq:solofndim}
\Delta_{\gamma}w+\frac{1-s}{z}\partial_zw+\partial^2_zw = 
\partial^2_{x_N}v-x_N\partial_{x_N}v+\frac{1-s}{z}\partial_zv+\partial^2_zv=0,
	\end{equation}
	and
	\begin{equation}
	\label{eq:boundarycond}
	w(x,0)=v(x_N,0)=\chi_{H_r}(x_N)=\chi_{H^N_r}(x).
	\end{equation}
	Putting together \eqref{eq:solofndim} and \eqref{eq:boundarycond} we have that $w$ solves \eqref{eq:ndimensionalsolution}.
	Now we note that $w$ has finite energy. Indeed, 
	\begin{equation}
	\label{eq:finitenergy}
	\begin{split}
	\iint_{\R^{N+1}_+}\left(|\nabla_xw|^2+|\partial_zw|^2\right)d\gamma_N(x)z^{1-s}dz 
	=\iint_{\R^2_+}\left(|\partial_{y}v|^2+|\partial_zv|^2\right)d\gamma_1(y)z^{1-s}dz,
	\end{split}
	\end{equation}
	where we have used that $\gamma_N = \gamma_{N-1}\otimes \gamma_1$ and 
	$$
	\int_{\R^{N-1}}d\gamma_{N-1}(x')=1.
	$$
	Since the functional 
	$$
	H^1_{\text{loc}}\ni\varphi  \mapsto \iint_{\R^{N+1}_+}\left(|\nabla_x\varphi|^2+
	|\partial_z\varphi|^2\right)d\gamma_N(x)z^{1-s}dz 
	$$
	is strictly convex, it has only one critical point which coincides with the minimizer. Hence we 
	have proved that $w(x,z)=v(x_N,z)$ is the solution of the minimum problem 
	$$
	\inf \left\{\iint_{\R^{N+1}_+}\left(|\nabla_x u|^2+|\partial_z u|^2\right)d\gamma_N(x)z^{1-s}dz:
	\, u \in H^1_{\text{loc}} (\R^{N+1}_+), \, u(\cdot,0)=\chi_{H^N_r}\right\},
	$$
	and recalling the definition of $P_\gamma^s(H_r^N)$, the equality in \eqref{eq:finitenergy} gives the result. 
\end{proof}

\begin{remark}\label{rem:apperim}
	As it will be clear later, in order to have a more accurate control on the constant in the inequality \eqref{eq:main} we need an approximation of the value of the fractional Gaussian perimeter of the halfspace.
	Firstly, we define the normalized Hermite polynomials as
	$$
	h_n (x)= \frac{(-1)^n}{\sqrt{n!}}e^{\frac{x^2}{2}}
	\left(\frac{d}{dx}\right)^{n}\left(e^{-\frac{x^2}{2}}\right).
	$$
	It is well known that
	$$
	-\Delta_{\gamma_1} h_{n}= n h_n\quad\text{in}\quad\R \;\;\; \text{and}\;\;\;\int_{-\infty}^{+\infty}h_n h_m\, d\gamma= \delta_m^n.
	$$
	Thus now define the halfline $H_r:=(-\infty,r)$ and $f^r(x):=\chi_{H_r}(x)$. We expand $f^r$ on the basis given by $h_n$ and have
	$$
	f^r= \sum _{k=0}^{\infty} f^r_k h_k.
	$$
	It is quite simple to evaluate $f^r_k$, indeed those are just the projection of $f^r$ on $h_k$ and are given, for any $k\in\N\cup\left\{0\right\}$, by
	\[
	f^r_k= \int_{-\infty}^{+\infty} f^rh_k  d\, \gamma=
	\frac{1}{\sqrt{2\pi}}\int_{-\infty}^r \frac{(-1)^k}{\sqrt{k!}}
	\left(\frac{d}{dx}\right)^{k}\left(e^{-\frac{x^2}{2}}\right)\, dx
	=\frac{(-1)^k}{\sqrt{2\pi k!}}\left(\frac{d}{dr}\right)^{k-1}\left(e^{-\frac{r^2}{2}}\right),
	\]
	where, with abuse of notation when $k=0$
	$$
	f_0^r=\frac{1}{\sqrt{2\pi}}\left(\frac{d}{dr}\right)^{-1}\left(e^{-\frac{r^2}{2}}\right):=\frac{1}{\sqrt{2\pi}}\int_{-\infty}^re^{-\frac{t^2}{2}}dt.
	$$
	Hence the following formula holds
	\begin{equation}
	\begin{split}
	P_s^{\gamma_1} (H_r)&= \frac 12\int_{-\infty}^{+\infty} f^r (-\Delta_{\gamma_1})^\frac s2 f^r\, d \gamma=\frac 12\left(f_0^r\sum_{k=1}^\infty k^{\frac s2}f_k^r\int_{-\infty}^{+\infty}h_kd\gamma+\sum_{k=1}^\infty k^{\frac s2}(f_k^r)^2\right)
	\\
	&=\frac{1}{4\pi}\sum_{k=1}^\infty k^{\frac s2}\frac{1}{k!}\left(\left(\frac{d}{dr}\right)^{k-1}\left(e^{-\frac{r^2}{2}}\right)\right)^2=\frac{1}{4\pi}e^{-r^2}\sum_{k=1}^\infty \frac{1}{k^{1-\frac s2}}h_{k-1}^2(r),
	\end{split}
	\end{equation}
	where in the second and the third equality, respectively, we used the fact that
	$$
	\int_{-\infty}^{+\infty}h^2_kd\gamma=1\quad\text{and}\quad\int_{-\infty}^{+\infty}h_kd\gamma=0.
	$$
	Now we use the asymptotic behavior of the Hermite polynomials (see \cite[Pag. 201, Formula 18]{ErMaObTr}). After the change of variable $x=\frac {r}{\sqrt{2}}$ and the use of Stirling's formula for the Gamma function, we see that there exists $\nu\in\N$ such that
	$$
	h_{k-1}(r)\simeq \left(\frac 2\pi\right)^{1/4}\frac {e^{\frac{r^2}{4}}}{(k-1)^{\frac14}}\quad\text{for $k\ge\nu$}.
	$$
	Therefore,
	\begin{align*}
	P_s^{\gamma_1} (H_r)&\simeq \frac{1}{4\pi} \sqrt{\frac 2\pi}e^{-\frac{r^2}{2}}\left(\sum_{k=1}^\nu\frac{1}{k^{1-\frac s2}}h_{k-1}^2(r) +\sum_{k=\nu+1}^\infty\frac{1}{k^{1+\frac{1-s}{2}}}\right)\\
	&\simeq  \frac{1}{4\pi}\sqrt{\frac 2\pi}e^{-\frac{r^2}{2}}\left(c(r,\nu,s)+ \int_{\nu+1}^\infty \frac{dx}{x^{1+\frac{1-s}{2}}}\right)=\sqrt{\frac{\pi}{2}}\frac{1}{\pi^2}\left(c(r,\nu,s)+\frac{ e^{-\frac{r^2}{2}}(\nu+1)^{-\frac{1-s}{2}}}{1-s}\right)
	\end{align*}
	where $c(r,\nu,s)$ is the partial sum up to $k=\nu$ that is uniformly bounded with respect to $s\in[0,1]$ (since $\nu$ does not depend on $s$). Using Proposition \ref{prop:halfspace} this simply means that
	$$
	\lim_{s\to 1^-}(1-s)P^\gamma_s(H^N_r)=\lim_{s\to 1^-}(1-s)P^{\gamma_1}_s(H_r)\simeq \sqrt{\frac{\pi}{2}}\frac{1}{\pi^2}e^{-\frac{r^2}{2}}=\sqrt{\frac{\pi}{2}}\frac{1}{\pi^2}P_\gamma(H^N_r).
	$$
\end{remark}

From now on to shorten the notation, we set $U_E=U_{\chi_E}$ to denote the solution of problem \eqref{eq:extension}
when $\varphi=\chi_E$. 

The last proposition of this section gives an estimate of the rate of convergence of the 
Stinga-Torrea extension and will be useful later. 
\begin{proposition} \label{pr: spiccolo}
Let $s\in(0,1)$ and $\varphi\in\text{\rm Dom}((-\Delta_\gamma)^s)$. Let $U_\varphi$ be the solution of the extension problem
\begin{equation}
\left\{
\begin{array}{ll}
\Delta_{\gamma_x} V+\frac{1-2s}{z}\partial_z V+\partial^2_z V=0 \quad&\text{\rm in}\quad \R^{N+1}_+,
\\
V(x,0)=\varphi(x) &\text{\rm in}\quad\R^N.
\end{array}\right.
\end{equation}
Then, the following estimate holds
\begin{equation}\label{eq:2.11nostra}
\langle \varphi-U_\varphi(\cdot,z), \varphi\rangle_{L^2_\gamma}=\int_{\R^N} \varphi (\varphi-U_\varphi(\cdot,z))\,d\gamma\leq \beta_{2s} z^{2s} [\varphi]_{H^s_\gamma}^2
\end{equation}
with 
$$
\beta_{2s}:=\frac{1}{4^sK_{2s}}\frac{\Gamma(1-s)}{\Gamma(1+s)},
$$
where $K_{2s}$ is given in \eqref{eq:constant}.
\end{proposition}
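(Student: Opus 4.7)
The plan is to expand $\varphi - U_\varphi(\cdot,z)$ via the explicit representation in Theorem \ref{teo: extension}, pair it against $\varphi$ in $L^2_\gamma$, apply Proposition \ref{selfadjointness} to split the semigroup and rewrite the inner product as a positive quadratic form, and finally exploit the $L^2_\gamma$-contractivity of $e^{t\Delta_\gamma}$ to reduce the problem to an explicit one-variable integral in $t$.

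Concretely, I first write
\[
\varphi(x) - U_\varphi(x,z) = \frac{1}{\Gamma(s)}\int_0^\infty e^{t\Delta_\gamma}(-\Delta_\gamma)^s\varphi(x)\,\frac{1-e^{-z^2/(4t)}}{t^{1-s}}\,dt,
\]
since at $z=0$ Bochner's formula recovers $(-\Delta_\gamma)^{-s}(-\Delta_\gamma)^s \varphi = \varphi$. Multiplying by $\varphi$, integrating in $d\gamma$ and using Fubini yields
\[
\int_{\R^N}\varphi(\varphi-U_\varphi(\cdot,z))\,d\gamma = \frac{1}{\Gamma(s)}\int_0^\infty\frac{1-e^{-z^2/(4t)}}{t^{1-s}}\left(\int_{\R^N}\varphi\,e^{t\Delta_\gamma}(-\Delta_\gamma)^s\varphi\,d\gamma\right)dt.
\]
By Proposition \ref{selfadjointness}, the inner integral equals $\|e^{(t/2)\Delta_\gamma}(-\Delta_\gamma)^{s/2}\varphi\|_{L^2_\gamma}^2$, which by the contraction property of the Ornstein--Uhlenbeck semigroup is bounded by $\|(-\Delta_\gamma)^{s/2}\varphi\|_{L^2_\gamma}^2$; in particular the inner factor is nonnegative, which justifies writing the whole expression with an inequality sign, and \eqref{eq:seminormandoufrac} identifies this bound with $[\varphi]_{H^s_\gamma}^2/K_{2s}$.

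It remains to compute the one-dimensional integral
\[
\int_0^\infty\frac{1-e^{-z^2/(4t)}}{t^{1-s}}\,dt.
\]
With the change of variable $u=z^2/(4t)$ this becomes $4^{-s}z^{2s}\int_0^\infty (1-e^{-u})u^{-1-s}\,du$, and an integration by parts (the boundary terms vanish because $s\in(0,1)$) gives $\int_0^\infty (1-e^{-u})u^{-1-s}\,du = s^{-1}\Gamma(1-s)$. Putting everything together and using $s\Gamma(s)=\Gamma(1+s)$, the constant collapses to $\beta_{2s} = 4^{-s}\Gamma(1-s)/(K_{2s}\Gamma(1+s))$, as claimed.

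I expect no serious obstacle; the only delicate point is justifying the use of Fubini and the semigroup identity in Proposition \ref{selfadjointness} under the assumption $\varphi\in\text{Dom}((-\Delta_\gamma)^s)$, which however is built exactly so that $e^{t\Delta_\gamma}(-\Delta_\gamma)^s\varphi$ is well-defined in $L^2_\gamma$ for all $t>0$ and the integrand decays fast enough at $0$ and $\infty$.
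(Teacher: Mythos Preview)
Your proof is correct and follows essentially the same route as the paper: both use the Stinga--Torrea representation to write $\varphi-U_\varphi(\cdot,z)$ as an integral in $t$, apply Proposition \ref{selfadjointness} to rewrite the pairing as $\|e^{(t/2)\Delta_\gamma}(-\Delta_\gamma)^{s/2}\varphi\|_{L^2_\gamma}^2$, bound this by $\|(-\Delta_\gamma)^{s/2}\varphi\|_{L^2_\gamma}^2$ via the contractivity of the semigroup (the paper verifies this by differentiating in $t$, you simply invoke it), and then evaluate the remaining $t$-integral. The only cosmetic difference is that you spell out the change of variable and integration by parts in the final integral, while the paper states the result directly.
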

\begin{proof}
As a consequence of Theorem \ref{teo: extension} we know that the solution $U_\varphi(x,z)$ is given by
$$
U_\varphi(x,z)=\frac{1}{\Gamma(s)}\int_0^{\infty}e^{t\Delta_\gamma}((-\Delta_\gamma)^s\varphi)(x)
\frac{e^{-\frac{z^2}{4t}}}{t^{1-s}}\:dt.
$$
Then, we can write 
$$
U_\varphi(x,z)-\varphi(x)=\frac{1}{\Gamma(s)}\int_0^{\infty}e^{t\Delta_\gamma}((-\Delta_\gamma)^s\varphi)(x)
\left(\frac{e^{-\frac{z^2}{4t}}-1}{t^{1-s}}\right)dt
$$
and using Proposition \ref{selfadjointness}
\begin{equation}\label{eq:est2.11.1}
\begin{split}
\langle \varphi&-U_\varphi(\cdot,z), \varphi\rangle _{L^2_\gamma}=\frac{1}{\Gamma(s)}\int_0^{\infty}dt\int_{\R^N}\varphi e^{t\Delta_\gamma}((-\Delta_\gamma)^s\varphi)\left(\frac{1-e^{-\frac{z^2}{4t}}}{t^{1-s}}\right)\,d\gamma\\
&=\frac{1}{\Gamma(s)}\int_0^{\infty}\frac{1-e^{-\frac{z^2}{4t}} }{t^{1-s}}dt\int_{\R^N}e^{\frac t2\Delta_\gamma}((-\Delta_\gamma)^\frac s2\varphi) e^{\frac t2\Delta_\gamma}((-\Delta_\gamma)^\frac s2\varphi)\,d\gamma. 
\end{split}
\end{equation}
Now recall that the function $v(\cdot,t)=e^{\frac{t\Delta_\gamma}{2}}((-\Delta_\gamma)^\frac s2\varphi)$ is nothing 
but the solution of the Cauchy problem
\begin{equation}
\left\{
\begin{array}{ll}
2 \partial_t v= \Delta_\gamma v \quad &(x,t) \in \R^N\times(0,\infty)
\\
v(x,0)= (-\Delta_\gamma)^\frac s2\varphi(x) & x\in\R^N
\end{array}\right.
\end{equation}
evaluated at $t$. Hence we have
\begin{equation}
\label{eq:nonincreasing}
\frac{d}{dt}\|v(\cdot,t)\|^2_{L^2_\gamma}= \int_{\R^N}v\Delta_\gamma v\, d\gamma=
\int_{\R^N} v\Div (e^{-\frac{|x|^2}{2}}\nabla v)\,dx= - \int_{\R^N}|\nabla v|^2\, d\gamma \leq 0
\end{equation}
which implies that the $L^2_\gamma$ norm is nonincreasing in the $t$ variable. Hence, using \eqref{eq:seminormandoufrac} and \eqref{eq:nonincreasing} formula  \eqref{eq:est2.11.1} can be rewritten as
\begin{equation}\label{eq:est2.11.2}
\begin{split}
\langle \varphi-U_\varphi(\cdot,z), \varphi\rangle _{L^2_\gamma}
& \le  \frac{1}{\Gamma(s)}\|(-\Delta_\gamma)^\frac s2 \varphi\|_{L_\gamma^2}^2  \int_0^\infty 
\frac{1-e^{-\frac{z^2}{4t}}}{t^{1-s}}\,dt 
\\
& = \frac{1}{4^s}\frac{\Gamma(1-s)}{\Gamma(1+s)}z^{2s}\|(-\Delta_\gamma)^\frac s2 \varphi\|_{L_\gamma^2}^2 
= \beta_{2s}z^{2s} [\varphi]^2_{H^s_\gamma},
\end{split}
\end{equation}
with $\beta_{2s}$ as in the statement, and the proof is complete. 
\end{proof}

Since we are interested in applying the above lemma to characteristic functions and fractional perimeters, 
it is convenient to rewrite the above lemma with $\varphi=\chi_E$ and $s$ replaced by $s/2$. 
We notice that if $\varphi$ is a characteristic function, then $U_\varphi\leq 1$ everywhere 
(to prove it one uses the variational formulation and shows that replacing any competitor $v$ 
with $\max \{v,1\}$ the energy does not increase). This observation allows us to say that $\chi_{E} (\chi_E-U_{E})\geq 0$ in the whole $\R^{N+1}_+$.
Then \eqref{eq:2.11nostra} reads 
\begin{equation}\label{eq:quantichar}
\int_{E} (1-U_{E}(\cdot,z))\,d\gamma \leq \beta_{s} z^{s} [\chi_E]^2_{H_\gamma^\frac s2}= 
2 \beta_{s} z^{s} P_s^\gamma (E).
\end{equation}

\section{Estimates on the level sets of the extension}
\label{sec:quattro}
This section contains some technical results that are the core of the proof of the Main Theorem. Our strategy follows the ideas in \cite{BraCinVit}: we first estimate $D_s^\gamma(E)$ from below with a quantity involving the asymmetry of the superlevel sets of $U_E(\cdot,z)$ and then, in a suitable range of values for the function $U_E$ and for the vertical variable $z$, we show that the asymmetry of the superlevel sets is estimated from below by $\mathcal{A}_\gamma(E)$.

\vspace{.5cm}

The following proposition provides an enhanced version of an inequality proved in \cite{NovPalSir}.
In the spirit of \cite{BraCinVit, FuMiMo}, given a set $E$, we apply the Stinga-Torrea extension to the
function $\chi_E$ and exploit the sharp Gaussian quantitative inequality proved in \cite{BarBraJul}.
\begin{proposition}
	\label{prop:quattroquattronostra}
	Let $s\in(0,1)$ and let $E\subset\R^N$ be an open set with 
	$P_s^{\gamma}(E)<\infty$. For $t>0$ and $z>0$, we set
	$$
	E_{t,z}:=\left\{x\in\R^N: U_E(x,z)>t \right\}\quad,\quad \mu_z(t):=\gamma(E_{t,z}),
	$$
	and, for any $m\in(0,1)$
	$$f(m):=\frac{e^{\frac{\Phi^{-1}(m)^2}{2}}}{1+\Phi^{-1}(m)^2}.$$
Then for every halfspace $H:=H_{\omega,r}$ s.t. $\gamma(H)=\gamma(E)$ we have
	\begin{equation}
	P_s^\gamma(E)-P_s^\gamma(H)\ge\frac{1}{2c}\int_0^{\infty} z^{1-s} dz
	\int_0^{\infty}f(\mu_z(t))\mathcal{A}^2_\gamma(E_{t,z})\frac{I(\mu_z(t))}{-\mu^{'}_z(t)}dt
	\end{equation}
	where $c$ is the absolute constant in \cite[Main Theorem]{BarBraJul}. 
\end{proposition}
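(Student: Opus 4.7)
The plan is to write $2(P_s^\gamma(E)-P_s^\gamma(H))$ as a difference of weighted Dirichlet energies of the Stinga-Torrea extensions, and then to bound it from below slice by slice in the vertical variable. Letting $\omega$ be the direction of $H$, for every $z>0$ I would set $V^*(\cdot,z):=[U_E(\cdot,z)]^*_\omega$, the Ehrhard rearrangement of $U_E(\cdot,z)$ in the direction $\omega$. Since Ehrhard rearrangement maps $\chi_E$ to $\chi_H$, the function $V^*$ has boundary trace $\chi_H$ at $z=0$ and is therefore an admissible competitor in the variational problem defining $P_s^\gamma(H)$. Combining the Gaussian Polya-Szego inequality for Ehrhard rearrangement recalled in Section \ref{sec:intro} (giving $\int|\nabla_x V^*|^2\, d\gamma \leq \int|\nabla_x U_E|^2\, d\gamma$ slice by slice) with the $L^2_\gamma$-non-expansivity of Ehrhard rearrangement (applied to incremental quotients in $z$ to obtain $\int|\partial_z V^*|^2\, d\gamma \leq \int|\partial_z U_E|^2\, d\gamma$ for a.e.\ $z$), and testing with $V^*$, I obtain
\[
2\bigl(P_s^\gamma(E)-P_s^\gamma(H)\bigr)\ \geq\ \int_0^\infty z^{1-s}\,dz \int_{\R^N}\bigl(|\nabla_x U_E|^2-|\nabla_x V^*|^2\bigr)\,d\gamma.
\]

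For each fixed $z$, I then apply the Gaussian coarea formula together with the Cauchy-Schwarz inequality to $U_E(\cdot,z)$, in the spirit of \cite{BraCinVit, FuMiMo}: using $\mathcal{H}^{N-1}_\gamma(\{U_E(\cdot,z)=t\})=P_\gamma(E_{t,z})$ and the coarea identity expressing $-\mu_z'(t)$ as (up to a normalization constant) $\int_{\{U_E=t\}} |\nabla_x U_E|^{-1}\, d\mathcal{H}^{N-1}_\gamma$, one gets
\[
\int_{\R^N}|\nabla_x U_E|^2\, d\gamma\ \geq\ c_0\int_0^\infty \frac{P_\gamma(E_{t,z})^2}{-\mu_z'(t)}\, dt,
\]
while the analogous chain is an \emph{equality} for $V^*$ (whose superlevel sets are halfspaces of measure $\mu_z(t)$, on which $|\nabla_x V^*|$ is constant), producing the same formula with $I(\mu_z(t))^2$ in place of $P_\gamma(E_{t,z})^2$. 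Subtracting and using $P_\gamma(E_{t,z})^2-I(\mu_z(t))^2 \geq 2 I(\mu_z(t))\bigl(P_\gamma(E_{t,z})-I(\mu_z(t))\bigr)$ (valid by \eqref{eq:gaussisopine}), and then applying the sharp Gaussian quantitative isoperimetric inequality of \cite{BarBraJul} in the form $P_\gamma(E_{t,z})-I(\mu_z(t)) \geq \frac{1}{c}\,f(\mu_z(t))\,\mathcal{A}_\gamma(E_{t,z})^2$ to each superlevel set $E_{t,z}$, one recovers the claimed estimate after integration in $z$, with the normalization factors absorbed into the constant $c$.

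The principal obstacle I anticipate lies in the first step, namely in justifying the $L^2_\gamma$-contractivity in $z$ of the slice-wise Ehrhard rearrangement; this reduces to the $L^2_\gamma$-non-expansivity $\|u^*_\omega-v^*_\omega\|_{L^2_\gamma}\leq\|u-v\|_{L^2_\gamma}$ of the Ehrhard rearrangement via difference quotients in $z$ together with lower semicontinuity. A secondary technical point is that the coarea and Cauchy-Schwarz manipulations require enough regularity of $U_E(\cdot,z)$, but smoothness of $U_E$ in $x$ for $z>0$, which follows from standard interior elliptic regularity for the uniformly elliptic equation satisfied by the extension away from $\{z=0\}$, is sufficient for Sard's lemma and the level-set calculations to go through for a.e.\ $t$.
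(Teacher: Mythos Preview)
Your proposal is correct and follows essentially the same route as the paper. Both arguments compare the weighted Dirichlet energy of $U_E$ with that of its slice-wise Ehrhard rearrangement $V^*=U_E^*$, then use coarea and Cauchy--Schwarz on the $x$-gradient, exploit equality for the rearranged function, and insert the sharp quantitative inequality of \cite{BarBraJul}. The only cosmetic differences are: (i) for the $z$-derivative inequality the paper simply invokes \cite[Lemma~3.2]{NovPalSir}, whose content is exactly the $L^2_\gamma$-nonexpansivity of the Ehrhard rearrangement you describe (so your anticipated ``principal obstacle'' is already handled in the literature and you may cite it directly); (ii) the paper performs the algebra as $P^2\ge (P^*)^2+2P^*(P-P^*)$ before applying \cite{BarBraJul}, while you subtract first and then bound $P^2-I^2\ge 2I(P-I)$, which is the same inequality. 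One small point: the statement records the explicit constant $\tfrac{1}{2c}$, so rather than ``absorbing normalization factors into $c$'' you should track them; the $\sqrt{2\pi}$ discrepancies between $d\gamma$ and $d\mathcal{H}^{N-1}_\gamma$ cancel in the ratio $P_\gamma(E_{t,z})^2/(-\mu_z'(t))$, and combining the factor $2I(\mu_z(t))$ with the constant $\tfrac{1}{4c}f(\mu_z(t))$ from \cite{BarBraJul} yields precisely $\tfrac{1}{2c}f(\mu_z(t))I(\mu_z(t))$.
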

\begin{proof}
We have 
$$
P_s^\gamma(E)=\frac 12\left[\chi_E\right]^2_{H_\gamma^{\frac{s}{2}}}=
\frac 12\left(\iint_{\R^{N+1}_+}z^{1-s}|\nabla_xU_E|^2d\gamma(x)dz+
\iint_{\R^{N+1}_+}z^{1-s}|\partial_z U_E|^2d\gamma(x)dz\right).
$$
For the $z$-derivative, we may compute (see \cite[Lemma 3.2]{NovPalSir}).
\begin{equation}\label{eq:novpalsir}
\iint_{\R^{N+1}_+}z^{1-s}|\partial_z U_E|^2d\gamma(x)dz \ge 
\iint_{\R^{N+1}_+}z^{1-s}|\partial_z U^\ast_E|^2d\gamma(x)dz,
\end{equation}
while for the $x$-derivative, by using the coarea formula we have 
\begin{equation}
\label{eq:xder}
\begin{split}
\iint_{\R^{N+1}_+}z^{1-s}&|\nabla_xU_E|^2d\gamma(x)dz
=\int_0^{\infty}z^{1-s}dz\int_0^{\infty}dt\int_{\left\{x\in\mathbb{R}^N: U_E(x,z)=t\right\}}
|\nabla_xU_E|d\mathcal{H}_\gamma^{N-1}(x) 
\\
&\ge \int_0^{\infty}z^{1-s}dz\int_0^{\infty}\frac{P_\gamma(E_{t,z})^2}{\int_{\left\{x\in\mathbb{R}^N: U_E(x,z)=t\right\}}\frac{d\mathcal{H}_\gamma^{N-1}(x)}{|\nabla_xU_E|}}dt,
\end{split}
\end{equation}
where we have used H\"older's inequality with exponents $(2,2)$ to get
\begin{equation}\label{eq:holdfurba}
P_\gamma(E_{t,z})^2
\le \left(\int_{\partial^\ast E_{t,z}}|\nabla_x U_E|\:d\mathcal{H}^{N-1}_\gamma(x)\right)
\left(\int_{\partial^\ast E_{t,z}}\frac{d\mathcal{H}^{N-1}_\gamma(x)}{|\nabla_x U_E|}\right).
\end{equation}
Now, we consider the Ehrhard-symmetrized of the set $E_{t,z}$
$$
E_{t,z}^\ast=\left\{x\in\R^N:\quad U^\ast_E(x,z)>t \right\}
$$
and, from the trivial inequality
$$
(P_\gamma(E_{t,z})-P_\gamma(E^\ast_{t,z}))^2\ge 0,
$$
we easily obtain 
\begin{equation}
\label{eq:square}
P_\gamma(E_{t,z})^2 \ge P_\gamma(E^\ast_{t,z})^2+2P_\gamma(E^\ast_{t,z})(P_\gamma(E_{t,z})-P_\gamma(E^\ast_{t,z})).
\end{equation}
Moreover the Main Theorem in \cite{BarBraJul} provides us the following quantitative inequality
\begin{equation}
\label{eq:cifumapr}
P_\gamma(E)-P_\gamma(E^\ast) = P_\gamma(E)-e^{-\frac{r^2}{2}} \ge 
\frac{e^{\frac{r^2}{2}}}{4c(1+r^2)} \mathcal{A}_\gamma(E)^2,
\end{equation}
for any set $E$ such that $\gamma(E)=m$, with $r=\Phi^{-1}(m)$, and for some absolute constant $c>0$, see the discussions in the 
Introduction of \cite{BarBraJul} and in \cite{BarJul}.

Inserting \eqref{eq:cifumapr} in \eqref{eq:square} we conclude that
\begin{equation}
\label{eq:quantsquared}
P_\gamma(E_{t,z})^2\ge P_\gamma(E^\ast_{t,z})^2+\frac{f(\mu_z(t))}{2c}P_\gamma( E^\ast_{t,z})\mathcal{A}_\gamma(E_{t,z})^2.
\end{equation}
If we put \eqref{eq:quantsquared} into \eqref{eq:xder} we obtain 
\begin{equation}\label{eq:primaquantitativa}
\begin{split}
\iint_{\R^{N+1}_+}z^{1-s}|\nabla_xU_E|^2d\gamma(x)dz 
\ge & \int_0^{\infty}z^{1-s}dz\int_0^{\infty}\frac{P(E^\ast_{t,z})^2}{-\mu^{'}_z(t)}dt
\\
&+\frac{1}{2c}\int_0^{\infty}z^{1-s}dz\int_0^{\infty}f(\mu_z(t))\frac{P_\gamma(E^\ast_{t,z})\mathcal{A}_\gamma(E_{t,z})^2}{-\mu^{'}_z(t)}dt
\end{split}
\end{equation}
where we have the equalities
\begin{align*}
\mu_z(t)&=\gamma(E^*_{t,z})=
\int_t^{\infty}ds\int_{\partial E^\ast_{s,z}}\frac{d\mathcal{H}^{N-1}_\gamma(x)}{|\nabla_xU_E^\ast|},
\\
\mu'_z(t)&=-\int_{\partial E^\ast_{t,z}}\frac{d\mathcal{H}^{N-1}_\gamma(x)}{|\nabla_xU_E^\ast|}.
\end{align*}
By using these facts we obtain
\begin{align*}
\int_0^{\infty}z^{1-s}dz&\int_0^{\infty}\frac{P(E^\ast_{t,z})^2}{-\mu^{'}_z(t)}dt=\int_0^{\infty}z^{1-s}dz\int_0^{\infty}\frac{P(E^\ast_{t,z})^2}{\int_{\partial E^\ast_{t,z}}\frac{d\mathcal{H}^{N-1}_\gamma(x)}{|\nabla_xU_E^\ast|}}dt\\
&=\int_0^{\infty}z^{1-s}dz\int_0^{\infty}\left(\int_{\partial E^\ast_{t,z}}|\nabla_xU_E^\ast|d\mathcal{H}^{N-1}_\gamma(x)\right)dt,
\end{align*}
where we have applied H\"older's inequality with exponents (2,2) as in \eqref{eq:holdfurba}. 
In this case the equality occurs, as the functions 
$|\nabla_xU_E^\ast|^{1/2}$ and 
$|\nabla_xU_E^\ast|^{-1/2}$ are constant on the level plane $\partial E^\ast_{t,z}$. By applying the coarea formula we get
\begin{equation}\label{eq:coareaultima}
\int_0^{\infty}z^{1-s}dz\int_0^{\infty}\left(\int_{\partial E^\ast_{t,z}}|\nabla_xU_E^\ast|d\mathcal{H}^{N-1}_\gamma(x)\right)dt=\iint_{\R^{N+1}_+}z^{1-s}|\nabla_xU_E^\ast|^2d\gamma(x)dz.
\end{equation}
By plugging \eqref{eq:coareaultima} into \eqref{eq:primaquantitativa} and summing with \eqref{eq:novpalsir} we finally obtain
\begin{align*}
P_\gamma^s(E)=&\frac 12\left(\iint_{\R^{N+1}_+}z^{1-s}|\nabla_xU_E|^2d\gamma(x)dz
+\iint_{\R^{N+1}_+}z^{1-s}|\partial_z U_E|^2d\gamma(x)dz\right)
\\
\ge& \frac 12\left(\iint_{\R^{N+1}_+}z^{1-s}|\nabla_xU^*_E|^2d\gamma(x)dz+
\iint_{\R^{N+1}_+}z^{1-s}|\partial_z U^*_E|^2d\gamma(x)dz\right)
\\
&+\frac{1}{2c}\int_0^{\infty}z^{1-s}dz\int_0^{\infty}f(\mu_z(t))\frac{P_\gamma(E^\ast_{t,z})\mathcal{A}_\gamma(E_{t,z})^2}{-\mu^{'}_z(t)}dt
\\
=&P_\gamma^s(H)+\frac{1}{2c}\int_0^{\infty}z^{1-s}dz\int_0^{\infty}f(\mu_z(t))\frac{P_\gamma(E^\ast_{t,z})\mathcal{A}_\gamma(E_{t,z})^2}{-\mu^{'}_z(t)}dt,
\end{align*}
hence, recalling that $P_\gamma(E^\ast_{t,z})=I(\gamma(E^\ast_{t,z}))$, we get the thesis.
\end{proof}

The next lemma roughly says that if we know how asymmetric is a set and we are given another set which is not too different (in the measure sense) from the first one, then the asymmetry of the second set can be controlled from below by the asymmetry of the first one. 
\begin{lemma}
	\label{lem:quattrounonostro}
	Let $E, F\subset\R^N$ be two measurable sets such that
	\begin{equation}
	\label{eq:transferofasym}
	\frac{\gamma(F\triangle E)}{\gamma(F)}\leq \kappa\mathcal{A}_\gamma(F),
	\end{equation}
	for some $0<\kappa<1/2$. Then
	$$
	\mathcal{A}_\gamma(E) \ge \frac{1-2\kappa}{c_\kappa}\mathcal{A}_\gamma(F),
	$$
	where $c_\kappa:=\begin{cases}
	1,\quad\text{if}\quad \gamma(E\setminus F)=0, \\
	1+2\kappa,\quad\text{if}\quad \gamma(E\setminus F)>0.
	\end{cases}$
\end{lemma}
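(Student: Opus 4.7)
The statement is a standard triangle-inequality type transfer of asymmetry, applied carefully to halfspaces. The plan is to compare the optimal halfspace $H_E$ for $E$ with a concentric halfspace $H_F$ of Gaussian volume equal to that of $F$; the proximity of $E$ and $F$ in Gaussian measure then forces a quantitative comparison of their asymmetries.

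First, I would select a halfspace $H_E = H_{\omega,r}$ that realizes the minimum in $\mathcal{A}_\gamma(E)$, so that $\gamma(H_E)=\gamma(E)$ and $\gamma(E\triangle H_E)=\mathcal{A}_\gamma(E)\gamma(E)$. Let $H_F=H_{\omega,r'}$ be the halfspace with the same direction $\omega$ and with $\gamma(H_F)=\gamma(F)$. Because $H_E$ and $H_F$ share the same normal, one of them is contained in the other, hence
\[
\gamma(H_E\triangle H_F)=|\gamma(H_E)-\gamma(H_F)|=|\gamma(E)-\gamma(F)|\le\gamma(E\triangle F).
\]

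Next, the triangle inequality for symmetric differences (equivalently, the $L^1(\gamma)$ triangle inequality applied to characteristic functions) gives
\[
\gamma(F\triangle H_F)\le\gamma(F\triangle E)+\gamma(E\triangle H_E)+\gamma(H_E\triangle H_F)\le 2\gamma(E\triangle F)+\mathcal{A}_\gamma(E)\gamma(E).
\]
Since $\gamma(H_F)=\gamma(F)$, the very definition of the Fraenkel asymmetry yields $\gamma(F\triangle H_F)\ge\mathcal{A}_\gamma(F)\gamma(F)$. Inserting the hypothesis $\gamma(E\triangle F)\le\kappa\mathcal{A}_\gamma(F)\gamma(F)$ and rearranging leads to
\[
(1-2\kappa)\mathcal{A}_\gamma(F)\gamma(F)\le\mathcal{A}_\gamma(E)\gamma(E),
\]
which is already the desired estimate up to the volume ratio $\gamma(F)/\gamma(E)$.

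The case distinction in $c_\kappa$ comes precisely from estimating that ratio. If $\gamma(E\setminus F)=0$, then $E\subset F$ up to $\gamma$-null sets, so $\gamma(E)\le\gamma(F)$ and the factor $\gamma(F)/\gamma(E)\ge 1$ gives $c_\kappa=1$. Otherwise, using the elementary bound $\mathcal{A}_\gamma(F)\le 2$ (which follows from $\gamma(F\triangle H)\le\gamma(F)+\gamma(H)=2\gamma(F)$ for any admissible halfspace $H$), one has
\[
\gamma(E)\le\gamma(F)+\gamma(E\triangle F)\le\gamma(F)\bigl(1+\kappa\mathcal{A}_\gamma(F)\bigr)\le\gamma(F)(1+2\kappa),
\]
giving $c_\kappa=1+2\kappa$. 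There is no substantive obstacle in this argument; the only point requiring care is the case analysis at the end, which is exactly what is needed to recover both values of $c_\kappa$ declared in the statement.
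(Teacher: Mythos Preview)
Your proof is correct and follows essentially the same route as the paper's: choose an optimal halfspace for $E$, compare it with a parallel halfspace of volume $\gamma(F)$, apply the $L^1_\gamma$ triangle inequality together with $\gamma(H_E\triangle H_F)=|\gamma(E)-\gamma(F)|\le\gamma(E\triangle F)$, and then handle the volume ratio $\gamma(F)/\gamma(E)$ by the same case split using $\mathcal{A}_\gamma(F)\le 2$. The only cosmetic difference is that the paper writes the chain starting from $\mathcal{A}_\gamma(E)$ and bounding it from below, whereas you bound $\gamma(F\triangle H_F)$ from above and then invoke the definition of $\mathcal{A}_\gamma(F)$; these are the same inequality rearranged.
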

\begin{proof}
	The case $\mathcal{A}_\gamma(F)=0$ is trivial, so we can suppose that $\mathcal{A}_\gamma(F)>0$. 
	We take a halfspace $H$ such that $\gamma(H)=\gamma(E)$ and
	$$
	\mathcal{A}_\gamma(E)=\frac{\gamma(E\triangle H)}{\gamma(E)},
	$$
	and the halfspace $H'$ with $\gamma(H')=\gamma(F)$ and such that $H$ is contained in $H'$ or vice versa. 
	We recall that
	$$
	\gamma(F\triangle E)=\left\|\chi_F-\chi_E\right\|_{L^1_\gamma},
	$$
	and by using the triangle inequality we obtain 
	\begin{equation*}
	\begin{split}
	\mathcal{A}_\gamma(E)&=\frac{\gamma(E\triangle H)}{\gamma(E)} \ge 
	\frac{\gamma(F)}{\gamma(E)}\left(\frac{\gamma(F\triangle H')}{\gamma(F)}
	-\frac{\gamma(H'\triangle H)}{\gamma(F)}
	-\frac{\gamma(F\triangle E)}{\gamma(F)}\right) 
	\\
	&\ge\frac{\gamma(F)}{\gamma(E)}\left(\mathcal{A}_\gamma(F)
	-2\frac{\gamma(F\triangle E)}{\gamma(F)}\right) 
	\ge\frac{\gamma(F)}{\gamma(E)}(1-2\kappa)\mathcal{A}_\gamma(F),
	\end{split}
	\end{equation*}
	where in the second inequality we have used the fact that
	$$
	\gamma(H'\triangle H)=\left|\gamma(F)-\gamma(E)\right|\le\gamma(F\triangle E).
	$$
	In order to conclude, we need to get a lower bound for the ratio $\gamma(F)/\gamma(E)$. 
	If $\gamma(E\setminus F)=0$, we have 
	$$
	\frac{\gamma(F)}{\gamma(E)}=\frac{\gamma(F)}{\gamma(E\cap F)}\ge 1.
	$$
	If $\gamma(E\setminus F)>0$, we observe that
	$$
	\frac{\gamma(F)}{\gamma(E)}=\frac{\gamma(F)}{\gamma(E\setminus F)+\gamma(E\cap F)}
	\ge\frac{\gamma(F)}{\gamma(F\triangle E)+\gamma(F)}\ge\frac{1}{1+\kappa\mathcal{A}_\gamma(F)}.
	$$
  We conclude by recalling that the Gaussian Fraenkel asymmetry is always smaller than 2.
	\end{proof}
	Now we prove a technical result similar to \cite[Lemma 4.2]{BraCinVit}. It states that if we are 
	not going too far in the vertical direction, then the level sets of the extension of the 
	characteristic function of a set $E$ are comparable to $E$ itself.
\begin{lemma}
	\label{lem:quattroduenostro}
	For $\alpha>0$ fixed, the following implication holds:
	$$
	\text{if}\quad \frac 14 \leq t \leq \frac 34 \quad \text{and} \quad 
0<z<\left(\frac{1}{8\alpha\beta_s P^\gamma_s(E)}\right)^{\frac{1}{s}},
	$$
	then
	\begin{equation}	\label{eq:estimatedifferencesets1}
	\gamma (E \setminus	\{x\in\R^N:\,U_E(x,z)>t \}) \leq \frac1\alpha
	\end{equation}
	and
	\begin{equation}
	\label{eq:estimatedifferencesets2}
	\gamma\left(\left\{x\in\R^N:\,U_E(x,z)>t \right\}\setminus E\right)
	\le \frac{1}{\alpha}.
	\end{equation}
\end{lemma}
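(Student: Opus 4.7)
The plan is to deduce both inequalities from the quantitative $L^2_\gamma$-convergence estimate \eqref{eq:quantichar}, combined with a Chebyshev-type argument. For \eqref{eq:estimatedifferencesets1} I would use \eqref{eq:quantichar} for $E$ directly, while for \eqref{eq:estimatedifferencesets2} I would apply it to the complement $E^c$ after identifying $U_{E^c}$ with $1-U_E$.

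For \eqref{eq:estimatedifferencesets1}, on the set $E \setminus \{U_E(\cdot,z) > t\}$ one has $U_E \leq t \leq 3/4$, hence $1 - U_E \geq 1/4$. Since $0 \leq U_E \leq 1$, the integrand $\chi_E(1-U_E)$ is nonnegative and restricting the integration in \eqref{eq:quantichar} to this smaller set yields
\begin{equation*}
\tfrac{1}{4}\,\gamma\bigl(E\setminus\{U_E(\cdot,z)>t\}\bigr) \leq \int_E (1-U_E(\cdot,z))\,d\gamma \leq 2\beta_s z^s P_s^\gamma(E).
\end{equation*}
The hypothesis $z^s < 1/(8\alpha\beta_s P_s^\gamma(E))$ then forces the right-hand side to be strictly less than $1/(4\alpha)$, and \eqref{eq:estimatedifferencesets1} follows.

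For \eqref{eq:estimatedifferencesets2} I would first observe that $U_{E^c}=1-U_E$. Indeed, problem \eqref{eq:extension} is linear and admits the constant function $1$ as a solution, so $1-U_E$ has trace $\chi_{E^c}$ on $\{z=0\}$ and the same energy as $U_E$; the strict convexity of the Dirichlet-type functional (already exploited in Proposition \ref{prop:halfspace}) gives uniqueness of the minimizer, and in particular $P_s^\gamma(E^c)=P_s^\gamma(E)$. Applying \eqref{eq:quantichar} to $E^c$ therefore yields
\begin{equation*}
\int_{E^c} U_E(\cdot,z)\,d\gamma \;=\; \int_{E^c}(1-U_{E^c}(\cdot,z))\,d\gamma \;\leq\; 2\beta_s z^s P_s^\gamma(E).
\end{equation*}
On $\{U_E(\cdot,z)>t\}\setminus E = E^c\cap\{U_E(\cdot,z)>t\}$ the function $U_E$ exceeds $t \geq 1/4$, so the analogous Chebyshev estimate closes the argument.

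No step here presents a serious obstacle: the lemma is essentially a Chebyshev inequality applied to the quantitative decay of the Stinga--Torrea extension toward its boundary datum. The only point worth spelling out is the symmetry identity $U_{E^c}=1-U_E$ and the resulting invariance $P_s^\gamma(E^c)=P_s^\gamma(E)$, both of which are consequences of linearity and uniqueness in \eqref{eq:extension}.
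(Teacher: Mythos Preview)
Your proof is correct and follows essentially the same route as the paper's: a Chebyshev-type bound derived from \eqref{eq:quantichar}, using $1-U_E\ge 1/4$ on the relevant set, together with the symmetry $U_{E^c}=1-U_E$ for the second estimate. The only cosmetic difference is that the paper packages the Chebyshev step by introducing the auxiliary set $B_{E,z}=\{x\in E:\,1-U_E(x,z)>2\alpha\beta_sP_s^\gamma(E)z^s\}$ and showing $E\setminus\{U_E>t\}\subset B_{E,z}$, whereas you restrict the integral directly; the underlying argument is identical.
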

\begin{proof}
Fixed $z\in(0,\infty)$, we set
	$$
	B_{E,z}:=\left\{x\in E: (1-U_E(x,z)) >
	2\beta_sP_s^\gamma(E)\alpha z^{s}\right\}.
	$$
	Then, by using the Markov-Chebychev inequality and \eqref{eq:quantichar}, we get 
	\begin{equation} \label{eq:lemmatecn}
  	\gamma\left(B_{E,z}\right)\leq 
	\frac{1}{2\beta_sP_s^\gamma(E)\alpha z^{s}}
	\int_{E}(1-U_E(\cdot,z))\, d\gamma
	\leq \frac{1}{\alpha }.
	\end{equation}
	We now take $t$ and $z$ as in the statement. Then for every $x\in E$ such that $U_E(x,z)\leq t$, we have
	$$
	1-U_E(x,z)\geq 1-t\ge\frac 14>2\alpha \beta_sP^\gamma_s(E)z^{s}
	$$
	that is
	\begin{equation*}
	\left\{x\in\R^N\,:\,U_E(x,z)\leq t \right\}\cap E =
	E \setminus	\left\{x\in\R^N\,:\,U_E(x,z)>t \right\}
	\subset B_{E,z}.
	\end{equation*}
	By using \eqref{eq:lemmatecn}, we get \eqref{eq:estimatedifferencesets1}. 
	Inequality \eqref{eq:estimatedifferencesets2} can be obtained in the same way replacing $E$ with $E^c$
	and using $U_{E^c}=1-U_E$.
	\end{proof}
	Next proposition is an easy application of the previous Lemmas \ref{lem:quattrounonostro} and \ref{lem:quattroduenostro} and is one of the main ingredients 
	in the proof of our Main Theorem.
\begin{proposition}
	\label{prop:quattrotrenostra}
	For $t\in\left[\frac 14,\frac 34\right]$ and $z\in(0,z_0]$, where
	$$
	z_0:=\left(\frac{\mathcal{A}_\gamma(E)\gamma(E)}{72\beta_sP^\gamma_s(E)}\right)^{\frac{1}{s}},
	$$ 
	we have
	\begin{equation}
	\label{eq:trequattordici}
	    |\gamma(E_{t,z}) - \gamma(E)| \leq \frac  {2}{9} \gamma(E)\mathcal{A}_\gamma(E)
	\end{equation}
and
	\begin{equation}
	\label{eq:asymmetry}
	\mathcal{A}_\gamma\left(E_{t,z}\right)\ge
	\frac {5}{13}\mathcal{A}_\gamma(E).
	\end{equation}
\end{proposition}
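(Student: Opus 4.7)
The proof should be a direct combination of the two preceding lemmas. The key is to choose the parameter $\alpha$ in Lemma \ref{lem:quattroduenostro} precisely so that the admissible range of $z$ matches the definition of $z_0$.

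First I would set $\alpha := 9/(\gamma(E)\mathcal{A}_\gamma(E))$. With this choice, the threshold in Lemma \ref{lem:quattroduenostro} becomes
\[
\left(\frac{1}{8\alpha\beta_s P^\gamma_s(E)}\right)^{1/s} = \left(\frac{\gamma(E)\mathcal{A}_\gamma(E)}{72\beta_s P^\gamma_s(E)}\right)^{1/s} = z_0,
\]
so for any $t\in[1/4,3/4]$ and $z\in(0,z_0]$ Lemma \ref{lem:quattroduenostro} yields simultaneously
\[
\gamma(E\setminus E_{t,z})\le \frac{1}{\alpha} = \frac{1}{9}\gamma(E)\mathcal{A}_\gamma(E)\quad\text{and}\quad \gamma(E_{t,z}\setminus E)\le \frac{1}{\alpha} = \frac{1}{9}\gamma(E)\mathcal{A}_\gamma(E).
\]
Summing these gives $\gamma(E\triangle E_{t,z})\le \frac{2}{9}\gamma(E)\mathcal{A}_\gamma(E)$. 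Since $|\gamma(E_{t,z})-\gamma(E)|\le \gamma(E\triangle E_{t,z})$, this immediately proves \eqref{eq:trequattordici}.

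For \eqref{eq:asymmetry}, I would apply Lemma \ref{lem:quattrounonostro} with $F:=E$, $E$ replaced by $E_{t,z}$, and $\kappa:=2/9<1/2$. The hypothesis \eqref{eq:transferofasym} is exactly the bound just derived, namely $\gamma(E\triangle E_{t,z})/\gamma(E) \le \frac{2}{9}\mathcal{A}_\gamma(E) = \kappa\mathcal{A}_\gamma(E)$. Since in general $\gamma(E_{t,z}\setminus E)>0$, we are in the second case of the definition of $c_\kappa$, so $c_\kappa = 1+2\kappa = 13/9$. Therefore
\[
\mathcal{A}_\gamma(E_{t,z}) \ge \frac{1-2\kappa}{c_\kappa}\mathcal{A}_\gamma(E) = \frac{1-4/9}{13/9}\mathcal{A}_\gamma(E) = \frac{5}{13}\mathcal{A}_\gamma(E),
\]
which is \eqref{eq:asymmetry}. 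There is no real obstacle here beyond the arithmetic of matching the constants $72$, $\tfrac{2}{9}$, $\tfrac{5}{13}$ to the choice $\alpha=9/(\gamma(E)\mathcal{A}_\gamma(E))$; the case $\mathcal{A}_\gamma(E)=0$ is trivial since then both inequalities hold automatically (with the convention $z_0=0$, the statement is vacuous).
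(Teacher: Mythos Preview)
Your proof is correct and follows essentially the same approach as the paper: the same choice $\alpha=9/(\gamma(E)\mathcal{A}_\gamma(E))$ in Lemma~\ref{lem:quattroduenostro}, the same bound on $\gamma(E\triangle E_{t,z})$, and the same application of Lemma~\ref{lem:quattrounonostro} with $\kappa=2/9$ (using the worse constant $c_\kappa=1+2\kappa$) to obtain the factor $5/13$.
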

\begin{proof}
     Observe that by using \eqref{eq:estimatedifferencesets1} and  \eqref{eq:estimatedifferencesets2} in Lemma \ref{lem:quattroduenostro} with the choice
	$$
	\alpha:=\frac{9}{\mathcal{A}_\gamma(E)\gamma(E)},
	$$
	we get
	\begin{equation*}
	\begin{split}
	\frac{\gamma\left(E_{t,z}\triangle E\right)}{\gamma(E)}&=\frac{\gamma\left(E\setminus E_{t,z}\right)}{\gamma(E)}+\frac{\gamma\left(E_{t,z}\setminus E\right)}{\gamma(E)} \\
	&\le \frac{2}{\alpha}\frac{1}{\gamma(E)}=\frac 29\mathcal{A}_\gamma(E).
	\end{split}
	\end{equation*}
	Finally, by triangle inequality we have
	$$
	\gamma(E)-\gamma\left(E_{t,z}\triangle E\right)\le\gamma(E_{t,z})\le \gamma(E)+\gamma\left(E_{t,z}\triangle E\right),
	$$
	thus by joining the last two estimates we get \eqref{eq:trequattordici}.
	We can now apply Lemma \ref{lem:quattrounonostro} with $\kappa=2/9$, so we obtain
	$$
	\mathcal{A}_\gamma\left(E_{t,z}\right)\ge\frac{1-\frac 49}{1+\frac 49}\mathcal{A}_\gamma(E)=
	\frac {5}{13}\mathcal{A}_\gamma(E),
	$$
	and this concludes the proof.
\end{proof}
\section{Proof of the Main Theorem}
\label{sec:cinque}
Now our goal is to prove that
\begin{equation}
\label{eq:quantisoine}
D^\gamma_s(E)=P^\gamma_s(E)-P^\gamma_s(H)\ge C_{s,m}\mathcal{A}_\gamma(E)^{\frac{2}{s}}
\end{equation}
where $H$ is a halfspace such that $\gamma(H)=\gamma(E)=m$.
We also observe that if $P^\gamma_s(E)>2P^\gamma_s(H)$, then by using that $\mathcal{A}_\gamma(E)<2$
$$
P^\gamma_s(E)-P^\gamma_s(H)>P^\gamma_s(H)>\frac{P^\gamma_s(H)}{2^{\frac 2s}}\mathcal{A}_\gamma(E)^{\frac{2}{s}}.
$$
Therefore, we reduce ourselves to consider the case
\begin{equation}
\label{eq:assumpdoubleperimeter}
P^\gamma_s(E)\le2P^\gamma_s(H).
\end{equation}
We are now ready to prove our Main Theorem.
\begin{proof}[Proof of the Main Theorem]
Since $\gamma(E)+\gamma(E^c)=1$ and $P^\gamma_s(E)=P^\gamma_s(E^c)$ we can assume with no loss of generality that $\gamma(E)\le\frac 12$.

We set
$$
z_1:=\left(\frac{\mathcal{A}_\gamma(E)\gamma(E)}{144\beta_sP^\gamma_s(H)}\right)^{\frac{1}{s}},
$$
by assumption \eqref{eq:assumpdoubleperimeter}, we have
\begin{equation*}
z_1<z_0=\left(\frac{\mathcal{A}_\gamma(E)\gamma(E)}{72\beta_sP^\gamma_s(E)}\right)^{\frac{1}{s}},
\end{equation*}
where $z_0$ is defined in Proposition \ref{prop:quattrotrenostra}.
By using Proposition \ref{prop:quattroquattronostra} in conjunction with Proposition \ref{prop:quattrotrenostra}, we have 
\begin{equation*}
\begin{split}
P^\gamma_s(E)-P^\gamma_s(H)&\ge \frac{1}{2c}\int_0^{\infty}z^{1-s}dz\int_0^{\infty}f(\mu_z(t))\mathcal{A}_\gamma(E_{t,z})^2\frac{I(\mu_z(t))}{-\mu'_z(t)}dt \\
&\ge\frac{1}{2c}\int_0^{z_1}z^{1-s}dz\int_{\frac 14}^{\frac 34}f(\mu_z(t))\mathcal{A}_\gamma(E_{t,z})^2\frac{I(\mu_z(t))}{-\mu'_z(t)}dt \\
&\ge\frac{25}{338c}\mathcal{A}_\gamma(E)^2\int_0^{z_1}z^{1-s}dz\int_{\frac 14}^{\frac 34}f(\mu_z(t))\frac{I(\mu_z(t))}{-\mu'_z(t)}dt \\
&\ge\frac{25\sqrt{e}}{676c}\mathcal{A}_\gamma(E)^2\int_0^{z_1}z^{1-s}dz\int_{\frac 14}^{\frac 34}\frac{I(\mu_z(t))}{-\mu'_z(t)}dt.
\end{split}
\end{equation*}
where in the last inequality we used the fact that the function $\R\ni x\mapsto e(x):=\frac{e^{\frac{x^2}{2}}}{1+x^2}$ is bounded from below by $\sqrt{e}/2$ and that $f=e\circ\Phi^{-1}$.
We observe that by using \eqref{eq:trequattordici} and the fact that $\mathcal{A}_\gamma(E)<2$, for every
$t\in\left[\frac 14,\frac 34\right]$ we get 
$$
\frac 59\gamma(E)<\gamma(E)\left(1-\frac 29\mathcal{A}_\gamma(E)\right)\le\mu_z(t)\le\gamma(E)\left(1+\frac 29\mathcal{A}_\gamma(E)\right)<\frac{13}{9}\gamma(E),
$$
and so,
$$
I\left(\mu_z(t)\right)\ge \min\left\{I(\xi),\quad \xi\in\left[\frac 59\gamma(E),\frac{13}{9}\gamma(E)\right]\right\}=:\sigma_{\gamma(E)},
$$
for every $t\in\left[\frac 14,\frac 34\right]$ and for every $z\in[0,z_1]$.
This in turn implies that
$$
P_s^{\gamma}(E)-P_s^{\gamma}(H)\ge \frac{25\sqrt{e}}{676c}\sigma_{\gamma(E)}\mathcal{A}_\gamma(E)^2\int_0^{z_1}z^{1-s}dz\int_{\frac 14}^{\frac 34}\frac{1}{-\mu^{'}_z(t)}dt.
$$
We estimate the inner integral in $t$ by using Jensen's inequality
$$
\int_{\frac 14}^{\frac 34}\frac{1}{-\mu'_z(t)}dt\ge\frac{1}{4}
\left(\int_{\frac 14}^{\frac 34}-\mu^{'}_z(t)dt\right)^{-1}\ge
\frac{1}{4}\left(\gamma\left(E_{\frac 14,z}\right)-\gamma\left(E_{\frac 34,z}\right)\right)^{-1}.
$$
By using \eqref{eq:trequattordici} with $t=1/4$ and $t=3/4$, we get
$$
\gamma\left(E_{\frac 14,z}\right)-\gamma\left(E_{\frac 34,z}\right)\le\gamma(E)\left(1+\frac 29\mathcal{A}_\gamma(E)\right)-\gamma(E)\left(1-\frac 29\mathcal{A}_\gamma(E)\right)=\frac 49\gamma(E)\mathcal{A}_\gamma(E).
$$
In conclusion, we get
\begin{equation*}
\begin{split}
P_s^{\gamma}(E)-P_s^{\gamma}(H)&\ge\frac 94\frac{25\sqrt{e}}{676c}\frac{\mathcal{A}_\gamma(E)}{\gamma(E)}\frac{\sigma_{\gamma(E)}}{4}\int_0^{z_1}z^{1-s}dz \\
&=\frac{3^2\cdot 5^2}{676c}\frac{\mathcal{A}_\gamma(E)}{\gamma(E)}\frac{\sigma_{\gamma(E)}}{16}\frac{\sqrt{e}}{2-s}z_1^{2-s} \\
&=\frac{3^{4-\frac 4s}\cdot5^2}{13^2c}\left(\frac 12\right)^{\frac{8}{s} +2}\frac{\sqrt{e}}{2-s}\frac{\sigma_{\gamma(E)}\gamma(E)^{\frac 2s-2}}{\left(\beta_sP^\gamma_s(H)\right)^{\frac 2s-1}}\mathcal{A}_\gamma(E)^{\frac 2s},
\end{split}
\end{equation*}
and this concludes the proof.
\end{proof}

\section{Further remarks and open problems}
\label{sec:sei}
Some comments on the constant $C_{s,m}$ obtained in the Main Theorem are in order: though it is quite explicit, unfortunately we only have an upper bound for the constant $c$ (coming from the sharp quantitative Gaussian isoperimetric inequality in \cite{BarBraJul}) and we have only an approximation of the value of the fractional Gaussian perimeter of the halfspace provided by Remark \ref{rem:apperim}. Moreover, the constant does not seem to be stable as $s\to 0^+$ or $s\to 1^-$ and the exponent $2/s$ of the asymmetry does not seem to be sharp. Indeed, in complete similarity with the Euclidean case proved in \cite{FiFuMaMiMo}, we expect the optimal power to be $2$ for any $s\in(0,1)$ although the techniques we used do not lead to the expected sharp exponent even in the Euclidean case, as one can see in \cite{FuMiMo} for the fractional perimeter or in \cite{BraCinVit} for a nonlocal spectral functional.

The fact that $C_{s,m}$ is independent of the dimension suggests to generalize the result in infinite dimension, as usual in the framework of Gauss spaces, replacing $\R^N$ with an infinite dimensional Wiener space. Unfortunately, at the moment this is not possible using an argument of approximation via cylindrical functions, even in the local case. Indeed, the proof of our result relies on other papers where dimension-free inequalities are
provided, such as \cite{BarBraJul,BarJul}. Nevertheless, these results (as well as ours) do not
extend to the infinite dimensional case because fine properties of sets with finite perimeter and
regularity results for almost minimizers of the perimeter functional are used, that are not
available in infinite dimensions.

\begin{bibdiv}
	\begin{biblist}

		\bib{AmDeMa}{article}{
			author={Ambrosio, L.},
			author={De Philippis, G.},
			author={Martinazzi, L.},
			title={Gamma-convergence of nonlocal perimeter functionals},
			journal={Manuscripta Math.},
			volume={134},
			date={2011},
			number={3-4},
			pages={377--403},
		}
		
		\bib{AFP}{book}{
			author={Ambrosio, L.},
			author={Fusco, N.},
			author={Pallara, D.},
			title={Functions of bounded variation and free discontinuity problems},
			series={Oxford Mathematical Monographs},
			publisher={The Clarendon Press, Oxford University Press, New York},
			date={2000},
			pages={xviii+434},
		}

		\bib{BarBraJul}{article}{
			author={Barchiesi, M.},
			author={Brancolini, A.},
			author={Julin, V.},
			title={Sharp dimension free quantitative estimates for the Gaussian
				isoperimetric inequality},
			journal={Ann. Probab.},
			volume={45},
			date={2017},
			number={2},
			pages={668--697},
		}
	
	\bib{BarJul}{article}{
		author={Barchiesi, M.},
		author={Julin, V.},
		title={Symmetry of minimizers of a Gaussian isoperimetric problem},
		journal={Probab. Theory Related Fields},
		volume={177},
		date={2020},
		number={1-2},
		pages={217--256},
	}
		
		\bib{BerPal}{article}{
			author={Berendsen, J.},
			author={Pagliari, V.},
			title={On the asymptotic behaviour of nonlocal perimeters},
			journal={ESAIM Control Optim. Calc. Var.},
			volume={25},
			date={2019},
			pages={Paper No. 48, 27 pages},
		}
		
		\bib{borell}{article}{
			author={Borell, C.},
			title={The Brunn-Minkowski inequality in Gauss space},
			journal={Invent. Math.},
			volume={30},
			date={1975},
			number={2},
			pages={207--216},
		}
		
		\bib{BraCinVit}{article}{
			author={Brasco, L.},
			author={Cinti, E.},
			author={Vita, S.},
			title={A quantitative stability estimate for the fractional Faber-Krahn
				inequality},
			journal={J. Funct. Anal.},
			volume={279},
			date={2020},
			number={3},
			pages={Paper No. 108560, 49 pages},
		}
	
	\bib{cabre}{article}{
		author={Cabr\'{e}, X.},
		title={Calibrations and null-Lagrangians for nonlocal perimeters and an
			application to the viscosity theory},
		journal={Ann. Mat. Pura Appl. (4)},
		volume={199},
		date={2020},
		number={5},
		pages={1979--1995},
	}
		
		\bib{CafRoqSav}{article}{
			author={Caffarelli, L.},
			author={Roquejoffre, J.-M.},
			author={Savin, O.},
			title={Nonlocal minimal surfaces},
			journal={Comm. Pure Appl. Math.},
			volume={63},
			date={2010},
			number={9},
			pages={1111--1144},
		}
		
		\bib{CafSil}{article}{
			author={Caffarelli, L.},
			author={Silvestre, L.},
			title={An extension problem related to the fractional Laplacian},
			journal={Comm. Partial Differential Equations},
			volume={32},
			date={2007},
			number={7-9},
			pages={1245--1260},
		}
	
	\bib{CaDoPaPi}{article}{
	author={Carbotti, A.},
	author={Don, S.},
	author={Pallara, D.},
	author={Pinamonti, A.},
	title={Local minimizers and Gamma-convergence for nonlocal perimeters in Carnot Groups},
	journal={ESAIM:COCV},
	volume={27}
	date = {2021},
}
		
	\bib{CCLP1}{article}{
	author={Carbotti, A.},
	author={Cito, S.},
	author={La Manna D. A.},
	author={Pallara, D.},
	title={Gamma-convergence of Gaussian Fractional Perimeter},
	journal={Advances in Calculus of Variations},
	date = {2021},
}

	\bib{CCLP2}{article}{
	author={Carbotti, A.},
	author={Cito, S.},
	author={La Manna D. A.},
	author={Pallara, D.},
	title={Asymptotics of the $s$-fractional Gaussian perimeter as $s\to 0^+$},
	journal={Submitted Paper},
	eprint={https://arxiv.org/pdf/2106.05641.pdf},
}	
		
		\bib{CarKer}{article}{
			author={Carlen, E. A.},
			author={Kerce, C.},
			title={On the cases of equality in Bobkov's inequality and Gaussian
				rearrangement},
			journal={Calc. Var. Partial Differential Equations},
			volume={13},
			date={2001},
			number={1},
			pages={1--18},
		}
	
	\bib{CesNov}{article}{
		author={Cesaroni, A.},
		author={Novaga, M.},
		title={The isoperimetric problem for nonlocal perimeters},
		journal={Discrete Contin. Dyn. Syst. Ser. S},
		volume={11},
		date={2018},
		number={3},
		pages={425--440},
	}
		
		\bib{CiFuMaPr}{article}{
			author={Cianchi, A.},
			author={Fusco, N.},
			author={Maggi, F.},
			author={Pratelli, A.},
			title={On the isoperimetric deficit in Gauss space},
			journal={Amer. J. Math.},
			volume={133},
			date={2011},
			number={1},
			pages={131--186},
		}
	
	\bib{ComSte}{article}{
		author={Comi, G. E.},
		author={Stefani, G.},
		title={A distributional approach to fractional Sobolev spaces and
			fractional variation: existence of blow-up},
		journal={J. Funct. Anal.},
		volume={277},
		date={2019},
		number={10},
		pages={3373--3435},
	}
		\bib{DL}{article}{
	author={De Rosa, A.},
	author={La Manna, D. A.},
	title={A nonlocal approximation of the Gaussian perimeter: Gamma
		convergence and Isoperimetric properties
	},
	journal = {Communications on Pure and Applied Analysis},
	volume = {20},
	date = {2021},
	number = {5},
	pages = {2101-2116},
}

	\bib{DiNoRuVa}{article}{
		author={Di Castro, A.},
		author={Novaga, M.},
		author={Ruffini, B.},
		author={Valdinoci, E.},
		title={Nonlocal quantitative isoperimetric inequalities},
		journal={Calc. Var. Partial Differential Equations},
		volume={54},
		date={2015},
		number={3},
		pages={2421--2464},
	}
		\bib{EhrScand}{article}{
			author={Ehrhard, A.},
			title={Sym\'{e}trisation dans l'espace de Gauss},
			language={French},
			journal={Math. Scand.},
			volume={53},
			date={1983},
			number={2},
			pages={281--301},
		}
	
	\bib{ehrhard}{article}{
		author={Ehrhard, A.},
		title={In\'{e}galit\'{e}s isop\'{e}rim\'{e}triques et int\'{e}grales de Dirichlet
			gaussiennes},
		language={French},
		journal={Ann. Sci. \'{E}cole Norm. Sup. (4)},
		volume={17},
		date={1984},
		number={2},
		pages={317--332},
	}
	
	\bib{eldan}{article}{
		author={Eldan, R.},
		title={A two-sided estimate for the Gaussian noise stability deficit},
		journal={Invent. Math.},
		volume={201},
		date={2015},
		number={2},
		pages={561--624},
	}
		
		\bib{ErMaObTr}{book}{
			author={Erd\'{e}lyi, A.},
			author={Magnus, W.},
			author={Oberhettinger, F.},
			author={Tricomi, F. G.},
			title={Higher transcendental functions. Vol. II},
			note={Based on notes left by Harry Bateman;
				Reprint of the 1953 original},
			publisher={Robert E. Krieger Publishing Co., Inc., Melbourne, Fla.},
			date={1981},
			pages={xviii+396},
		}
	
	\bib{FeStVo}{article}{
		author={Feo, F.},
		author={Stinga, P. R.},
		author={Volzone, B.},
		title={The fractional nonlocal Ornstein-Uhlenbeck equation, Gaussian
			symmetrization and regularity},
		journal={Discrete Contin. Dyn. Syst.},
		volume={38},
		date={2018},
		number={7},
		pages={3269--3298},
	}
		
		\bib{FiFuMaMiMo}{article}{
			author={Figalli, A.},
			author={Fusco, N.},
			author={Maggi, F.},
			author={Millot, V.},
			author={Morini, M.},
			title={Isoperimetry and stability properties of balls with respect to
				nonlocal energies},
			journal={Comm. Math. Phys.},
			volume={336},
			date={2015},
			number={1},
			pages={441--507},
		}
		
		\bib{FraSei}{article}{
			author={Frank, R. L.},
			author={Seiringer, R.},
			title={Non-linear ground state representations and sharp Hardy
				inequalities},
			journal={J. Funct. Anal.},
			volume={255},
			date={2008},
			number={12},
			pages={3407--3430},
		}
		
		\bib{FuMiMo}{article}{
			author={Fusco, N.},
			author={Millot, V.},
			author={Morini, M.},
			title={A quantitative isoperimetric inequality for fractional perimeters},
			journal={J. Funct. Anal.},
			volume={261},
			date={2011},
			number={3},
			pages={697--715},
		}
	
	\bib{lombardini}{article}{
		author={Lombardini, L.},
		title={Fractional perimeters from a fractal perspective},
		journal={Adv. Nonlinear Stud.},
		volume={19},
		date={2019},
		number={1},
		pages={165--196},
	}
		
		\bib{LunMetPal}{article}{
			author={Lunardi, A.},
			author={Metafune, G.},
			author={Pallara, D.},
			title={The Ornstein-Uhlenbeck semigroup in finite dimensions},
			journal={Philos. Trans. R. Soc. Lond. Ser. A Math. Phys. Eng. Sci.},
			volume={378},
			date={2020},
		}

		\bib{MarSan}{book}{
			author={Mart\'{\i}nez Carracedo, C.},
			author={Sanz Alix, M.},
			title={The theory of fractional powers of operators},
			series={North-Holland Mathematics Studies},
			volume={187},
			publisher={North-Holland Publishing Co., Amsterdam},
			date={2001},
			pages={xii+365},
		}
		
		\bib{MazRosTol}{book}{
			author={Maz\'{o}n, J. M.},
			author={Rossi, J. D.},
			author={Toledo, J. J.},
			title={Nonlocal perimeter, curvature and minimal surfaces for measurable
				sets},
			series={Frontiers in Mathematics},
			publisher={Birkh\"{a}user/Springer, Cham},
			date={2019},
			pages={xviii+123},
		}
	
	\bib{MosNee1}{article}{
		author={Mossel, E.},
		author={Neeman, J.},
		title={Robust dimension free isoperimetry in Gaussian space},
		journal={Ann. Probab.},
		volume={43},
		date={2015},
		number={3},
		pages={971--991},
	}

\bib{MosNee2}{article}{
	author={Mossel, E.},
	author={Neeman, J.},
	title={Robust optimality of Gaussian noise stability},
	journal={J. Eur. Math. Soc. (JEMS)},
	volume={17},
	date={2015},
	number={2},
	pages={433--482},
}
		
		\bib{NovPalSir}{article}{
			author={Novaga, M.},
			author={Pallara, D.},
			author={Sire, Y.},
			title={A fractional isoperimetric problem in the Wiener space},
			journal={J. Anal. Math.},
			volume={134},
			date={2018},
			number={2},
			pages={787--800},
		}
	
	\bib{pagliari}{article}{
		author={Pagliari, V.},
		title={Halfspaces minimise nonlocal perimeter: a proof {\it via}
			calibrations},
		journal={Ann. Mat. Pura Appl. (4)},
		volume={199},
		date={2020},
		number={4},
		pages={1685--1696},
	}
		
		\bib{StiTor}{article}{
			author={Stinga, P. R.},
			author={Torrea, J. L.},
			title={Extension problem and Harnack's inequality for some fractional
				operators},
			journal={Comm. Partial Differential Equations},
			volume={35},
			date={2010},
			number={11},
			pages={2092--2122},
		}
		
		\bib{SudCir}{article}{
			author={Sudakov, V. N.},
			author={Tsirelson, B. S.},
			title={Extremal properties of half-spaces for spherically invariant
				measures},
			language={Russian},
			note={Problems in the theory of probability distributions, II},
			journal={Zap. Nau\v{c}n. Sem. Leningrad. Otdel. Mat. Inst. Steklov. (LOMI)},
			volume={41},
			date={1974},
			pages={14--24, 165},
		}
	
	\bib{valdinoci}{article}{
		author={Valdinoci, E.},
		title={A fractional framework for perimeters and phase transitions},
		journal={Milan J. Math.},
		volume={81},
		date={2013},
		number={1},
		pages={1--23},
	}
	\end{biblist}
\end{bibdiv}
\end{document}